\def \1{\mathds{1}}
\def \a{\mathfrak a}
\def \A{{\mathbb A}}
\def \al{\alpha}
\def \be{\beta}
\def \bs{\backslash}
\def \const{\mathrm{const}}
\def \C{{\mathbb C}}
\def \CB{\mathcal{B}}
\def \CF{\mathcal{F}}
\def \CO{\mathcal{O}}
\def \CS{\mathcal{S}}
\def \CT{\mathcal{T}}
\def \eps{\varepsilon}
\def \F{{\mathbb F}}
\def \fin{\mathrm{fin}}
\def \g{\mathfrak g}
\def \G{{\mathbb G}}
\def \Ga{\Gamma}
\def \GL{\operatorname{GL}}
\def \ga{\gamma}
\def \Hom{\operatorname{Hom}}
\def \la{\lambda}
\def \N{{\mathbb N}}
\def \ol{\overline}
\def \P{{\mathbb P}}
\def \PGL{\operatorname{PGL}}
\def \Q{{\mathbb Q}}
\def \R{{\mathbb R}}
\def \Re{\operatorname{Re}}
\def \SL{\operatorname{SL}}
\def \sm{\smallsetminus}
\def \T{{\mathbb T}}
\def \vol{{\rm vol}}
\def \Z{{\mathbb Z}}
\def \({\left(}
\def \){\right)}
\newcommand{\e}
[1]{\emph{#1}\index{#1}}
\newcommand{\norm}
[1]{\left\|#1\right\|}
\renewcommand{\sp}
[1]{\left\langle #1\right\rangle}
\newcommand{\tto}
[1]{\stackrel{#1}{\longrightarrow}}
\newtheorem{theorem}{Theorem}[section]
\newtheorem{question}[theorem]{Question}
\newtheorem{lemma}[theorem]{Lemma}
\newtheorem{corollary}[theorem]{Corollary}
\newtheorem{proposition}[theorem]{Proposition}
\theoremstyle{definition}
\newtheorem{definition}[theorem]{Definition}
\newtheorem{example}[theorem]{Example}
\begin{document}

\pagestyle{myheadings} \markright{HEIGHT GROWTH}

\title{Height growth on semisimple groups}
\author{Anton Deitmar \& Rupert McCallum}
\date{}
\maketitle

{\bf Abstract:} A  condition is given, under which a general lattice point counting function is asymptotic to the corresponding ball volume growth function.
This is then used to give height asymptotics in the style of the Batyrev-Manin Conjecture for certain intrinsically defined heights on semisimple groups. 

$$ $$

\tableofcontents

\newpage
\section*{Introduction}

Let $V$ be a variety over $\Q$.
A line bundle over $V$ gives rise to a height function $h$ which can be viewed as a means of measuring how ``many'' rational points the variety possesses.
The Batyrev-Manin conjecture \cite{BaMan}, states an asymptotic formula for the number of points of height $\le x$.
It has been shown to hold for flag varieties in \cite{FrManTsch} and for the wonderful compactification of a semisimple linear algebraic group in \cite{Gorodnik}.
A linear algebraic group $\G$ is an affine variety and there is no ``natural'' height function, so one has to construct a height via extra data, such as a projective embedding.

In her doctoral thesis under the supervision of Victor Batyrev \cite{Mennecke}, Susanne Mennecke proposed a more intrinsic definition of a height function and asked for an analogue of the Batyrev-Manin conjecture.
This height function has the advantage of being built from data given by the group $\G$ itself, like its symmetric space and the Bruhat-Tits buildings of the groups of $p$-adic points.
The height function $h$ is defined as the product of local heights $h_v$, which  in turn are defined for any place $v$ by
$$
\log_v h_v(x)=\mathrm{dist}_v(xK_v,K_v),
$$
where $\log_v$ is the natural logarithm if $v$ is archimedean and equals the logarithm to the basis $q_v$ otherwise, where $q_v$ is the cardinality of the residue class field.
The point $K_v$ is the base point in the symmetric space in the archimedean case and the base point of the Bruhat-Tits building otherwise. The distance function is the combinatorial distance in the nonarchimedean case and the Riemannian distance if $v$ is archimedean.
Then the global height $h$ on the adelic points equals
$$
h(x)=\prod_vh_v(x_v).
$$
Let $\pi=\pi_\G$ denote the height counting function,
$$
\pi(x)=\#\Big\{\ga\in \G(\Q): h(\ga)\le x\Big\}.
$$
Mennecke showed that for the group $\G=\SL_2$ one obtains
$$
\pi(x)\sim cx^{3/2}
$$
for some $c>0$. She conjectured a similar asymptotic for $\G=\SL_d$, $d\ge 3$.
In the current paper we approach this problem via a general result on lattice-point problems on metric spaces, relating the lattice point counting to metric ball growth asymptotics.
In order to do computations we modify Mennecke's definition in two ways. First we replace the Riemannian distance at the infinite place by a metric which has canonical Weyl-group invariant polygons as metric balls, a change that seems natural since  it matches up with the combinatorial distance at the finite places.
Next we scale the metric at infinity 
by a free parameter which determines the dominance of either the volume growth at the finite, or the infinite places.
Our result for a general group reads
$$
\pi(x)\ \sim\ \frac {C}{\vol(\G_\Q\bs\G_\A)}\ (B\log x)^{r_\R-1}\ x^B,
$$
where $B$ is the scaling constant, $r_\R$ is the real rank of the group and $C$ is an explicit constant.
This result holds if the scaling constant $B$ is large. For small values of $B$ a general result of this type is harder to come by, since the volume growth $L$-function is not well behaved in this case.
In the case of the rank one groups $\PGL_2$ or $\SL_2$ we can derive an explicit  result confirming Mennecke's computations as follows:
We find that for the rank one group $\G=\PGL_2$ or $\G=\SL_2$ the  volume $b^\fin(T)$ of a ball of radius $T>0$ satisfies
$$
b^\fin(T)\sim c e^{dT},
$$
where, in the case $\G=\SL_2$ we have
$c=\frac{15}{2\pi^2}$ and $d=\frac32$, whereas for $\G=\PGL_2$ we get $c=\frac{15}{\pi^2}$ and $d=2$.  
If we then scale the metric at infinity with a scaling parameter $0<B<d$, then we get, as $x\to\infty$,
$$
\pi(x)\ \sim\ cd\int_0^\infty e^{-dt}b^\infty(t)\,dt\frac1{\vol(\G_\Q\bs\G_\A)}x^d.
$$
As for the wonderful compactification the local heights are at almost all places given by formula similar to ours (see Lemma 6.4 in \cite{STT}), these results are not too far from what has been done in the papers \cites{Gorodnik, STT, TT}.

\section{Lattice points and ball volume}

Recall the notion of a \e{matrix coefficient} of a unitary representation $(\pi,V_\pi)$ of a locally compact group $G$ on some Hilbert space $V_\pi$:
this is any function of the form $\psi_{v,w}:G\to\C$, $x\mapsto \sp{\pi(x)v,w}$ for some $v,w\in V_\pi$.

For a compact subgroup $K\subset G$ we write $V_\pi^K$ for the set of $K$-invariant vectors in $V_\pi$. 
We say that a matrix coefficient $\psi$ is $K$-invariant, if $\psi(kx)=\psi(xk)=\psi(x)$ holds for every $x\in G$ and all $k\in K$.
If this is the case, then $\psi=\psi_{v,w}$ for some $v,w\in V_\pi^K$.
To see this, assume that $\psi=\psi_{\tilde v,\tilde w}$ is $K$-invariant, then with the normalised Haar measure on $K$ we get $\psi=\psi_{v,w}$ with $v=\int_K\pi(k)\tilde v\,dk$ and $w=\int_K\pi(k)\tilde w\,dk$.

\begin{definition}
Let $G$ be a locally compact group with a fixed Haar measure $dx$.
Let $\Ga\subset G$ be a lattice, i.e., $\Ga$ is a discrete subgroup such that on $\Ga\bs G$ there exists a $G$-invariant Radon measure of finite volume.
Let $L_0^2(\Ga\bs G)$ denote the orthogonal space in $L^2(\Ga\bs G)$ of the one-dimensional space of constant functions, i.e.,
$$
L_0^2(\Ga\bs G)=\left\{ f\in L^2(\Ga\bs G): \int_{\Ga\bs G}f(x)\,dx=0\right\}.
$$
Let $K\subset G$ be a compact subgroup.
We say that $G$ acts \e{$K$-mixingly} on $\Ga\bs G$, if every $K$-invariant matrix coefficient in the Hilbert space $L_0^2(\Ga\bs G)$ vanishes at infinity.
\end{definition}

\begin{definition}
Let $b:[0,\infty)\to [0,\infty)$ be monotonically increasing with $\lim_{T\to\infty}b(T)=\infty$.
We say that $b$ is \e{regular}, if
$$
\lim_{\eps\searrow 0}\liminf_{T\to\infty}\frac{b(T-\eps)}{b(T)}=\lim_{\eps\searrow 0}\limsup_{T\to\infty}\frac{b(T+\eps)}{b(T)}.
$$
If this happens, then both limits are equal to $1$.
Note that regularity only depends on the asymptotic class, i.e., if $b(x)\sim b_1(x)$  as $x\to\infty$, then $b_1$ is regular if and only if $b$ is regular.
\end{definition}

\begin{example}
For any given $a,b\ge 0$, not both zero, the function $b(x)=x^ae^{bx}$ is regular.
The function $b(x)=e^{x^2}$ is not.
Also, the function $b(x)=e^{[x]}$ is not regular, where $[x]$ is the largest integer $\le x$.
\end{example}

\begin{theorem}\label{prop1.5}
Let $G$ be a locally compact group with a lattice $\Ga\subset G$.
Let $K\subset G$ be a compact group and assume that on $X=G/K$ there is given a $G$-invariant proper metric $d$.
Let $b(T)$ denote the Haar measure of the closed ball $\ol B_T$ of radius $T>0$ around $eK$ in $X$.
Let 
$$
N(T)=\#\Big\{ \ga\in\Ga: d(\ga K,e K)\le T\Big\}.
$$
Assume that
\begin{enumerate}[\rm (a)]
\item $G$ acts $K$-mixingly on $\Ga\bs G$ and
\item $b(T)$ is regular.
\end{enumerate}
Then we have, as $T\to\infty$,
$$
N(T)\ \sim\ \frac1{\vol(\Ga\bs G)}\, b(T).
$$
\end{theorem}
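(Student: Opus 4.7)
The plan is the standard reduction from lattice-point counting to mixing, in the style of Eskin--McMullen and Duke--Rudnick--Sarnak. Fix $\epsilon>0$ and choose a symmetric right-$K$-invariant neighborhood $U$ of $e$ in $G$ that (i) embeds into $\Ga\bs G$, i.e., $\gamma U\cap U=\emptyset$ for every $\gamma\in\Ga\sm\{e\}$, and (ii) satisfies $d(uK,eK)<\epsilon$ for all $u\in U$. Let $\phi=\chi_U/\vol(U)$; by (i) it descends to a probability density on $\Ga\bs G$, and by right-$K$-invariance of $U$ it is $K$-invariant there. Decompose $\phi=c_0+\phi_0$ with $c_0=1/\vol(\Ga\bs G)$ and $\phi_0\in L_0^2(\Ga\bs G)$ (both pieces $K$-invariant), and let $A_T=\int_{\ol B_T}R(g)\,dg$ be the corresponding averaging operator on $L^2(\Ga\bs G)$.

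The first step is geometric: since $d$ is $G$-invariant, $|d(\gamma uK,eK)-d(\gamma K,eK)|\le d(uK,eK)\le\epsilon$ for $u\in U$, which gives $N(T-\epsilon)\le\sum_\gamma\chi_{\ol B_T}(\gamma uK)\le N(T+\epsilon)$. The same reasoning applied to
\begin{align*}
A_T\phi(\Ga e)=\frac{\vol(\ol B_T\cap\Ga U)}{\vol(U)}=\frac{1}{\vol(U)}\sum_\gamma\vol(\ol B_T\cap\gamma U)
\end{align*}
yields $N(T-\epsilon)\le A_T\phi(\Ga e)\le N(T+\epsilon)$, since $\gamma U\subset\ol B_T$ whenever $\gamma K\in\ol B_{T-\epsilon}$ and $\gamma U\cap\ol B_T=\emptyset$ whenever $\gamma K\notin\ol B_{T+\epsilon}$. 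A change of variable (using $G$-invariance of $d$) also shows $A_{T-\epsilon}\phi(\Ga e)\le A_T\phi(\Ga v)\le A_{T+\epsilon}\phi(\Ga e)$ for $v\in U$, so averaging over $U$ and chaining gives
\begin{align*}
N(T-2\epsilon)\ \le\ \langle A_T\phi,\phi\rangle=\frac{1}{\vol(U)}\int_U A_T\phi(\Ga u)\,du\ \le\ N(T+2\epsilon).
\end{align*}

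Next comes the spectral computation. Since $R(g)$ fixes constants,
\begin{align*}
\langle A_T\phi,\phi\rangle=\int_{\ol B_T}\langle R(g)\phi,\phi\rangle\,dg=\frac{b(T)}{\vol(\Ga\bs G)}+\int_{\ol B_T}\langle R(g)\phi_0,\phi_0\rangle\,dg.
\end{align*}
The main point is that the error term is $o(b(T))$. The matrix coefficient $g\mapsto\langle R(g)\phi_0,\phi_0\rangle$ is $K$-invariant (both arguments are $K$-fixed), so by the $K$-mixing hypothesis it vanishes at infinity. Being bounded by $\|\phi_0\|^2$, for every $\delta>0$ there is a compact $C_\delta\subset G$ off of which it is smaller than $\delta$; the error integral is then at most $\|\phi_0\|^2\vol(C_\delta)+\delta\,b(T)$, and since $b(T)\to\infty$ we obtain $\limsup_{T\to\infty}b(T)^{-1}|\mathrm{error}|\le\delta$ for every $\delta>0$.

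Combining the two expressions yields $N(T-2\epsilon)\le b(T)/\vol(\Ga\bs G)+o(b(T))\le N(T+2\epsilon)$. Shifting $T\mapsto T\pm2\epsilon$, dividing by $b(T)$, and applying the regularity of $b$ (first $T\to\infty$, then $\epsilon\searrow 0$) makes the ratios $b(T\pm 2\epsilon)/b(T)\to 1$, producing $N(T)\sim b(T)/\vol(\Ga\bs G)$. The one nontrivial ingredient is the mixing-error bound; everything else is geometric bookkeeping that uses only $G$-invariance of $d$ and unimodularity of $G$ (automatic for groups admitting a lattice).
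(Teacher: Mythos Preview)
Your argument follows essentially the same Eskin--McMullen/DRS scheme as the paper: sandwich $N(T)$ between values of $\langle A_{T\pm\eps}\phi,\phi\rangle$, split off the contribution of constants to get $b(T)/\vol(\Ga\bs G)$, and control the remainder via $K$-mixing and regularity of $b$. The paper's version differs only cosmetically---it uses a smoothed cutoff $\chi_T$ in place of your sharp indicator of $\ol B_T$, and a general bump $\eta\in C_c(\Ga\bs G)$ in place of your normalized $\chi_U$---but the skeleton is identical.

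There is, however, one genuine technical oversight. You ask $U$ to be simultaneously right-$K$-invariant and to inject into $\Ga\bs G$. Any right-$K$-invariant neighborhood of $e$ contains $eK=K$; hence if $\Ga\cap K\ne\{e\}$ (as happens already for $\Ga=\SL_2(\Z)$, $K=\operatorname{SO}(2)$, where $-I$ lies in both), then for every nontrivial $\gamma\in\Ga\cap K$ one has $\gamma\in U\cap\gamma U$, and no such $U$ exists. The paper sidesteps this by \emph{not} requiring its bump $\eta$ to be $K$-invariant: since the ball indicator (or $f_T$) is $K$-bi-invariant, the averaging operator commutes with the orthogonal projection $P$ onto $K$-fixed vectors, so
\[
\langle R(f_T)\eta_0,\eta_0\rangle=\langle R(f_T)P\eta_0,P\eta_0\rangle=\int_G f_T(g)\,\langle R(g)P\eta_0,P\eta_0\rangle\,dg,
\]
and it is the $K$-invariant matrix coefficient $g\mapsto\langle R(g)P\eta_0,P\eta_0\rangle$ to which the $K$-mixing hypothesis is applied. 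Your proof is repaired by the same device: drop the right-$K$-invariance of $U$ (the geometric sandwich uses only $d(uK,eK)<\eps$ and the injectivity of $U$, neither of which needs $UK=U$), and in the error term replace $\phi_0$ by $P\phi_0$, using that $A_T=PA_TP$ because $\ol B_T$ is $K$-bi-invariant in $G$.
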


It may happen that the function $b$ is not regular per se, but there exists an unbounded subset $\CS\subset (0,\infty)$, such that $b$ is regular on $\CS$, i.e., one has
$$
\lim_{\eps\searrow 0}\liminf_{\substack{T\to\infty\\ T\in \CS}}\frac{b(T-\eps)}{b(T)}=\lim_{\eps\searrow 0}\limsup_{\substack{T\to\infty\\ T\in \CS}}\frac{b(T+\eps)}{b(T)}.
$$
An example is the function $b(x)=e^{[x]}$.
In that case, the asymptotic assertion remains true when restricted to $\CS$, i.e., one then has $\lim_{\substack{T\to\infty\\ T\in\CS}}\frac{N(T)}{b(T)}\vol(\Ga\bs G)=1$, which we denote as
$$
N(T)\ \sim_\CS\ \frac1{\vol(\Ga\bs G)}\, b(T).
$$

\begin{proof}
Fix some smooth, monotonically decreasing function $\chi:\R\to \R$ such that 
\begin{itemize}
\item $0\le\chi\le 1$,
\item $\chi(x)=1$, if $x<-1$,
\item $\chi(x)=0$, if $x>1$.
\end{itemize}
For $T>1$ let $\chi_T(x)=\chi(e^{T}(x-T))$.
Then we in particular have
$$
\chi_T(x)=\begin{cases} 1&x\le T-e^{-T},\\
0&x\ge T+e^{-T}.\end{cases}
$$

\begin{lemma}\label{lem1.5}
For every $\eps>0$ there is $T_0$ such that for $T\ge T_0$ we have
$$
\chi_{T-\eps}(x)\le \chi_T(x+\eps)\quad\text{and}\quad \chi_T(x-\eps)\le \chi_{T+\eps}(x).
$$
\end{lemma}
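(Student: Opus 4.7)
The plan is to reduce both pointwise inequalities to a single model statement by an affine change of variable, and then verify the model by case analysis, exploiting the plateau behaviour of $\chi$.

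\textbf{Reduction.} Expanding the definition $\chi_T(x)=\chi(e^T(x-T))$ and substituting $y:=x-T+\eps$, the first inequality becomes
\[
\chi\bigl(e^{T-\eps}\,y\bigr)\ \le\ \chi\bigl(e^T\,y\bigr)\qquad\text{for every }y\in\R.
\]
The analogous substitution $y:=x-T-\eps$ turns the second into $\chi(e^T y)\le\chi(e^{T+\eps}y)$. Both are therefore instances of the model statement: \emph{for $0<a<b$ with $b/a=e^{\eps}$ and $a,b\to\infty$ (as $T\to\infty$), one has $\chi(ay)\le\chi(by)$ for every $y\in\R$.}

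\textbf{Plateau cases.} Since $\chi\equiv 1$ on $(-\infty,-1)$ and $\chi\equiv 0$ on $(1,\infty)$, both sides of the model are constant outside the transition strip. If $y\le -1/a$ then $ay\le -1$ and $by\le -1$, so both sides equal $1$. If $y\ge 1/a$ then $\chi(ay)=0$ and the inequality is automatic. This reduces the work to the narrow window $|y|<1/a=e^{-(T-\eps)}$, whose length shrinks as $T\to\infty$.

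\textbf{Transition window.} For $y\in(-1/a,0]$ one has $ay\ge by$ (both are $\le 0$ and $a<b$), so monotonicity of $\chi$ gives $\chi(ay)\le\chi(by)$ at once. The main obstacle lies in the sub-interval $0<y<1/a$, where the naive monotonicity comparison runs the wrong way. One exploits the fact that $\chi(by)$ already attains its terminal value $0$ for $y\ge 1/b$, confining the residual discrepancy to the even shorter sub-interval $1/b<y<1/a$ of length $(1-e^{-\eps})\,e^{-(T-\eps)}$. Choosing $T_0$ so large that this length is negligible against the modulus of continuity of $\chi$ forces the pointwise inequality in the claimed direction. The identical reasoning, applied to $(a,b)=(e^T,e^{T+\eps})$, handles the companion inequality.
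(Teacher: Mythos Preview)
Your reduction is correct: with $y=x-T+\eps$ the first inequality becomes $\chi(ay)\le\chi(by)$ for $a=e^{T-\eps}<b=e^{T}$. The gap lies in the transition-window paragraph. For every $y\in(0,1/b)$ one has $0<ay<by<1$, so monotonicity of $\chi$ yields $\chi(ay)\ge\chi(by)$, with strict inequality whenever $\chi$ is strictly decreasing on $[ay,by]$ (for instance for a symmetric $\chi$ with $\chi(0)=\tfrac12$). Thus the obstruction fills the whole interval $(0,1/a)$, not just $(1/b,1/a)$; on the latter sub-interval the right-hand side $\chi(by)$ is already $0$, which makes the desired bound harder, not easier. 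The closing appeal to a modulus of continuity cannot help either: shrinking the window makes the difference $\chi(ay)-\chi(by)$ small but does not change its sign, and the lemma asserts a pointwise inequality, not an approximate one. In short, your own substitution exhibits, for every $T$, points $x$ at which the stated inequality fails.

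The paper argues without your substitution: it expands the right side as $\chi\bigl(e^T(x-T-\eps)\bigr)$ and then proves $e^T(x-T-\eps)\le e^{T-\eps}(x-T+\eps)$ on the range where the left side is nonzero, after which monotonicity of $\chi$ finishes. Observe, however, that $\chi\bigl(e^T(x-T-\eps)\bigr)$ equals $\chi_T(x-\eps)$, not $\chi_T(x+\eps)$. So what the paper's computation actually establishes is $\chi_{T-\eps}(x)\le\chi_T(x-\eps)$ (and, replacing $T$ by $T+\eps$, also $\chi_T(x+\eps)\le\chi_{T+\eps}(x)$). These inequalities are genuinely true for large $T$; your reduction confirms that the displayed version with $x+\eps$ on the right is not. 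The mismatch is a sign slip between the stated lemma and its proof, and no refinement of your approach can prove the statement as literally written.
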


\begin{proof}
The second claim follows from the first, so we only show the first.
This claim is equivalent to
$$
\chi(e^{T-\eps}(x-T+\eps))\le\chi(e^T(x-T-\eps)).
$$
If $e^{T-\eps}(x-T+\eps)\ge 1$, then the left hand side is zero and the claim follows.
Otherwise, we have $x\le e^{\eps-T}+T-\eps$ and so
\begin{align*}
(e^\eps-1)x-e^\eps\eps &\le
(e^\eps-1)(e^{\eps-T}+T-\eps)-e^\eps\eps\\
&=(e^\eps-1)T+\underbrace{\((e^\eps-1)(e^{\eps-T}-\eps)-e^\eps\eps\)}_{<0\text{ for }T\ll 0}.
\end{align*}
The second summand has a limit $<0$ as $T\to\infty$, therefore there exists $T_0$ such that this summand is $<\eps$ for $T\ge T_0$.
Therefore, for $T\ge T_0$ we have
$$
(e^\eps-1)x-e^\eps\eps
\le (e^\eps-1)T+\eps,
$$
or
$$
e^T(x-T-\eps)\le e^{T-\eps}(x-T+\eps)
$$
As $\chi$ is monotonically decreasing, the claim follows.
\end{proof}

Define a function $f_T:G\to\C$ by $f_T(x)=\chi_T(d(xK,K))$ and let $k_T(x,y)=\sum_{\ga\in \Ga}f_T(x^{-1}\ga y)$.
Then for any $\phi\in L^2(\Ga\bs G)$ we use the quotient integral formula 
\cite{HA2}*{Chapter 1} to compute
\begin{align*}
R(f_T)\phi(x)&=\int_Gf_T(y)\phi(xy)\,dy\\
&=\int_Gf_T(x^{-1}y)\phi(y)\,dy\\
&=\int_{\Ga\bs G}\sum_{\ga\in\Ga}f_T(x^{-1}\ga y)\phi(y)\,dy\\
&=\int_{\Ga\bs G}k_T(x,y)\phi(y)\,dy.
\end{align*}
So $k_T\in C(\Ga\bs G\times\Ga\bs G)$ is the integral kernel of the operator $R(f_T)$.

Let  $\la_T$ denote the eigenvalue of $R(f_T)$ on the space of constant functions. The latter has the function $\phi_\const\equiv\frac1{\sqrt{\vol(\Ga\bs G)}}$ for an orthonormal basis. We have
\begin{align*}
\la_T&=\sqrt{\vol(\Ga\bs G)}\la_{T}\phi_\const(1)\\
&=\sqrt{\vol(\Ga\bs G)} R(f_T)\phi_\const(1)\\
&=\sqrt{\vol(\Ga\bs G)}\int_{G}f_T(x)\phi_\const(x)\,dx\\
&=\int_{G}f_T(x)\,dx.
\end{align*}
The estimate $b(T)\le \int_G f_T\le b(T+e^{-T})$ leads to
$$
1\le \frac{\int_G f_T}{b(T)}\le \frac{b(T+e^{-T})}{b(T)}\le \frac{b(T+\eps)}{b(T)}
$$
for every given $\eps>0$, if $T>>0$.
As $b$ is regular, we infer
$$
\la_T\sim b(T),\quad\text{as}\ T\to\infty.
$$

Let now $U$ be an open neighborhood of the unit in $G$ such that $\ga U\cap U=\emptyset$ for every $\ga\in \Ga\sm\{1\}$.
Let $\eta\in C_c(\Ga\bs G)$ be 
of support inside $\Ga U$, such that $\eta\ge 0$ and $\int_{\Ga\bs G}\eta(x)\,dx=1$.
Write
$$
\eta=\eta_\const\oplus\eta_0,
$$
where $\eta_\const$ stands for the projection onto the space of constant functions, i.e., $\eta_\const=\sp{\eta,\phi_\const}\phi_\const=\frac1{{\vol(\Ga\bs G)}}\int_{\Ga\bs G}\eta(x)\,dx=\frac1{{\vol(\Ga\bs G)}}$.
Further, $\eta_0\in L^2_0(\Ga\bs G)$.
Then 
$$
R(f_T)\eta=\frac{\la_T}{{\vol(\Ga\bs G)}}\oplus R(f_T)\eta_0
$$
and so
$$
\sp{R(f_T)\eta,\eta}=\frac{\la_T}{{\vol(\Ga\bs G)}}+\sp{R(f_T)\eta_0,\eta_0}
$$
On the other hand, let $\CF\subset G$ be a measurable set of representatives for $\Ga\bs G$, where we can assume that $U\subset \CF$. Then integrating a $\Ga$-invariant function over $\Ga\bs G$ means the same as integrating it over $\CF$ and so
\begin{align*}
\sp{R(f_T)\eta,\eta}&=\int_{\Ga\bs G}\int_{\Ga\bs G}k_T(x,y)\eta(y)\,dy\,\eta(x)\,dx\\
&=\int_{\CF}\int_{\CF}k_T(x,y)\eta(y)\,dy\,\eta(x)\,dx\\
&=\int_{U}\int_{U}k_T(x,y)\eta(y)\,dy\,\eta(x)\,dx\\
&=\sum_{\ga\in \Ga}\int_U\int_U f_T(x^{-1}\ga y)\,\eta(x)\eta(y)\,dx\,dy.
\end{align*}
Now 
\begin{align*}
f_T(x^{-1}\ga y)&=\chi_T(d(x^{-1}\ga yK,K))\\
&= \chi_T(d(\ga yK,xK)),
\end{align*}
and we have
\begin{align*}
|d(\ga yK,xK)-d(\ga K,K)|&\le |d(\ga yK,xK)-d(\ga yK,K)|\\
&\ \ \ + |d(\ga yK,K)-d(\ga K,K)|\\
&\le d(xK,K)+d(\ga yK,\ga K)\\
&=d(xK,K)+d(yK,K)
\end{align*}

Let $\eps>0$ and suppose that $U$ is so small that for every $x\in U$ we have $d(xK,K)<\eps/2$.
Then for any $x,y\in U$ we have
$$
d(\ga xK,yK)-\eps\le d(\ga K,K)\le d(\ga xK,yK)+\eps.
$$
As $\chi_T$ is monotonically decreasing, it follows
$$
\chi_T(d(\ga xK,yK)+\eps)
\le \chi_T(d(\ga K,K))
\le \chi_T(d(\ga xK,yK)-\eps).
$$
By Lemma \ref{lem1.5} we get for $T$ large enough,
$$
\chi_{T-\eps}(d(\ga xK,yK))
\le \chi_T(d(\ga K,K))
\le \chi_{T+\eps}(d(\ga xK,yK)).
$$
Integrating and summing we get for $T>>0$,
$$
\sp{R(f_{T-\eps})\eta,\eta}\le N(T)\le \sp{R(f_{T+\eps})\eta,\eta}.
$$
In order to show the proposition, we need to estimate the term
$$
\sp{R(f_{T\pm\eps})\eta_0,\eta_0}=\sp{R(f_{T\pm\eps})P\eta_0,P\eta_0},
$$ 
where $P$ is the orthogonal projection to the set of $K$-invariants.
The $K$-mixing property implies that the matrix coeffient $x\mapsto \sp{R(x)P\eta_0,P\eta_0}$ vanishes at infinity.
So let $\delta>0$, then there exists a compact set $C\subset G$ such that 
$|\sp{R(x)P\eta_0,P\eta_0}|<\delta$ for $x\notin C$.
Let $G_T$ denote the set of all $g\in G$ such that $f_T(x)=1$. 
As $T\to\infty$, $G_T$ absorbs any compact set, so we may choose $T_0$ so large that $C\subset G_{T_0-\eps}$.
For $T\ge T_0$ we have
\begin{align*}
\frac1{b(T)}|\sp{R(f_{T\pm\eps})\eta_0,\eta_0}|
&=\frac1{b(T)}\left|\int_{G}f_{T\pm\eps}(x)\sp{R(x)P\eta_0,P\eta_0}\,dx\right|\\
&\le\frac1{b(T)}\int_{G}f_{T\pm\eps}(x)|\sp{R(x)P\eta_0,P\eta_0}|\,dx\\
&\le \frac1{b(T)}\(b(T_0)+\delta b(T+\eps)\)\\
&=\frac{b(T_0)(1-\delta)}{b(T)}+\delta\frac{b(T+\eps)}{b(T)}.
\end{align*}
The first summand tends to zero as $T\to\infty$.
From this we conclude
$$
\frac{N(T)}{b(T)}\le \frac{|\la_T|}{b(T)}+\frac{b(T_0)(1-\delta)}{b(T)}+\delta\frac{b(T+\eps)}{b(T)},
$$
which implies that
$$
\limsup_{T\to\infty}\frac{N(T)}{b(T)}=1.
$$
The limes inferior is dealt with analogously.
\end{proof}

\begin{corollary}\label{Cor1.5}
Let the situation be as in the theorem, except that $b(T)$ is not necessarily regular.
Then we conclude that for every $\eps>0$ we have, as $T\to\infty$,
$$
b(T-\eps)\ll N(T)\ll b(T+\eps).
$$
\end{corollary}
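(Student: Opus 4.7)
The plan is to rerun the argument of Theorem~\ref{prop1.5} and observe that, at the cost of a weaker conclusion, the regularity hypothesis can be dropped, provided one uses a slightly sharper form of the error bound.

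Fix $\eps>0$ and choose $0<\eps'<\eps$. From the proof of Theorem~\ref{prop1.5} the sandwich
$$
\sp{R(f_{T-\eps'})\eta,\eta}\ \le\ N(T)\ \le\ \sp{R(f_{T+\eps'})\eta,\eta}
$$
holds for $T$ large; it rests only on the triangle inequality and Lemma~\ref{lem1.5}, not on regularity. Expanding each side through the orthogonal decomposition $\eta=\eta_\const\oplus\eta_0$ gives
$$
\sp{R(f_S)\eta,\eta}=\frac{\la_S}{\vol(\Ga\bs G)}+\sp{R(f_S)\eta_0,\eta_0},
$$
with $b(S)\le\la_S\le b(S+e^{-S})$.

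The key step is to observe that the error term admits the bound
$$
|\sp{R(f_S)\eta_0,\eta_0}|\ \le\ \|\eta_0\|^2\vol(C)+\delta\la_S.
$$
Here $\delta>0$ is arbitrary and $C\subset G$ is the compact set supplied by $K$-mixing on whose complement $|\sp{R(x)P\eta_0,P\eta_0}|<\delta$; the bound is obtained by splitting $\int_G f_S(x)|\sp{R(x)P\eta_0,P\eta_0}|\,dx$ over $C$ (where $f_S\le 1$ and the matrix coefficient is bounded by $\|\eta_0\|^2$) and its complement (where $\int f_S\le\la_S$). The crucial point, which distinguishes this from the theorem's bound, is that the tail is controlled by $\la_S$ itself rather than by a shifted value of $b$.

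With this in hand, the upper bound follows from any fixed $\delta$ (say $\delta=1$) together with $\la_{T+\eps'}\le b(T+\eps'+e^{-(T+\eps')})\le b(T+\eps)$ for $T$ large, giving $N(T)\le C_1 b(T+\eps)$ for an explicit $C_1$. For the lower bound, fix $\delta<1/\vol(\Ga\bs G)$; then
$$
N(T)\ \ge\ \la_{T-\eps'}\left(\frac{1}{\vol(\Ga\bs G)}-\delta\right)-\|\eta_0\|^2\vol(C),
$$
and since $\la_{T-\eps'}\ge b(T-\eps')\ge b(T-\eps)\to\infty$, the additive constant is absorbed and $N(T)\gg b(T-\eps)$ for $T$ large. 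The obstacle that forced regularity in the theorem—controlling the ratio $b(T+\eps)/b(T)$ as $\eps\to 0$—does not arise here, because the error is proportional to $\la_S$ and not to a shifted volume; a single fixed $\delta$ suffices.
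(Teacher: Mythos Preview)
Your proposal is correct and follows the paper's own approach, which is simply to revisit the proof of Theorem~\ref{prop1.5} and extract the weaker conclusion; the paper's proof of this corollary is in fact just the one line ``This follows from the proof of the theorem.'' You have filled in the details, and your sharpening of the error estimate to $|\sp{R(f_S)\eta_0,\eta_0}|\le \|\eta_0\|^2\vol(C)+\delta\la_S$ (rather than the paper's $\delta b(T+\eps)$) is exactly the small refinement needed to make the lower bound go through without regularity, since it lets $\la_{T-\eps'}$ appear with the same sign on both sides and be factored out.
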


\begin{proof}
This follows from the proof of the theorem.
\end{proof}

\begin{definition}
By a \e{homogeneous metric measure space} we mean a quadruple $(G,X,d,\mu)$ consisting of a group $G$, a non-compact proper metric space $(X,d)$ on which $G$ acts transitively and isometrically, and a $G$-invariant Radon measure $\mu\ne 0$ on $X$.
\end{definition}

\begin{definition}
The metric $d$ is called an \e{inner metric} or \e{length metric}, if the distance $d(x,y)$ equals the infimum of lengthes of rectifiable curves joining $x$ and $y$.
\end{definition}

\begin{question}
Let $(G,X,d,\mu)$ be a homogeneous metric measure space and assume that the metric is inner.
Let $x_0\in X$ and let
$$
b(T)=\mu\(\Big\{x\in X: d(x,x_0)\le T\Big\}\).
$$
Is it true that the function $b$ is regular?
\end{question}

\begin{example}
Let $\CT$ be a regular tree of valency $q+1$, $q\in\N$, $q\ge 2$ and let $X$ be the set of vertices of $\CT$, equipped with the path length metric of $\CT$. The Automorphism group $G$ of $\CT$ acts isometrically on $X$ and with $\mu$ being the counting measure, $(G,X,d,\mu)$ is a homogeneous metric measure space.
In this case we have for $T\ge 1$,
$$
b(T)=1+(q+1)q^{[T]},
$$
where $[T]$ is the largest integer $k\le T$.
For $n\in\N$, $n\ge 3$ and $0<\eps<1$ we have
\begin{align*}
\frac{b(n-\eps)}{b(n)}
&=\frac{1+(q+1)q^{n-2}}{1+(q+1)q^{n-1}}=\frac1q\frac{q+1+\frac1{q^{n-2}}}{q+1+\frac1{q^{n-1}}}\ \tto{n\to\infty}\ \frac1q.
\end{align*}
Therefore, the function $b(T)$ is not regular.
\end{example}

\section{Small overlaps}
Let $(G,X,d,\mu)$ be a homogeneous metric measure space.
Write
$$
\ol B_T(x)=\{ y\in X:d(x,y)\le T\}
$$
be the closed $T$-ball around $x$.
For $T,\delta>0$ let $[\ol B_T:\delta]$ be defined as
$$
[\ol B_T:\delta]=\min\left\{ k\in\N: \ol B_T\subset \bigcup_{j=1}^k \ol B_\delta(x_j),\ x_1,\dots,x_k\in X\right\}.
$$
For any $T>0$ write 
$$
b(T)=\mu\(\ol B_T(x)\)
$$
for any $x\in X$ (doesn't depend on $x$).

\begin{definition}
We say that $X$ has \e{small overlaps}, if  the metric $d$ is inner and there exists a $T_0>0$ such that 
$$
\frac{[\ol B_T:\delta]b(\delta)}{b(T)}\ \tto{\delta\to 0}\ 1
$$
uniformly for $T\in [T_0,\infty)$.
\end{definition}

\begin{example}
The euclidean metric on $\R^n$, $n\ge 2$ has no small overlaps.
The metric on $\R^n$ given by the 1-norm
$$
\norm x_1=\sum_{j=1}^n |x_j|\quad \text{or}\quad \norm x_\infty =\max_j|x_j|
$$ both have small overlaps.
\end{example}
 
\begin{example}\label{ExRootnorm}
Let $\a$ denote a finite-dimensional euclidean vector space and let $\phi\subset\a^*=\Hom(\a,\R)$ be an irreducible root system.
For $\al\in\Phi$ and $k\in\N$ define the affine hyperplane
$$
H_{\al,k}=\{ x\in\a: \al(x)=k\}.
$$
The reflections at all $H_{\al,k}$ generate a group of affine isometries, the \e{Weyl group} $W$.
The set $\a\sm\bigcup_{\al,k}H_{\al,k}$ is a union of open simplices, called \e{affine chambers}.
Let $\ol B_1$ denote the closure of the union of those chambers $C$ which have zero in their closure.
Then $\ol B_1$ is a closed convex neighborhood of zero, so there exists exactly one norm $\norm .$ on $\a$ such that $\ol B_1=\{ v\in\a: \norm v\le 1\}$.
The metric on $\a$, given by this norm has small overlaps.
\end{example}

\begin{proposition}\label{lem2.4}
If $X$ is inner and  has small overlaps, then the function $b(T)$ is regular.
\end{proposition}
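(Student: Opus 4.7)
The plan is to establish both regularity limits $\lim_{\eps\searrow 0}\limsup_{T\to\infty} b(T+\eps)/b(T)=1$ and $\lim_{\eps\searrow 0}\liminf_{T\to\infty} b(T-\eps)/b(T)=1$ by combining the small-overlaps hypothesis with a covering inequality drawn from innerness. Since $b(T-\eps)\le b(T)\le b(T+\eps)$ is automatic, only the non-trivial directions need to be produced.

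First I would prove the covering inequality: for $\delta,\eps>0$ and $T>\eps$,
$$
[\ol B_{T+\eps}:\delta+\eps]\le[\ol B_T:\delta]\quad\text{and}\quad [\ol B_T:\delta+\eps]\le[\ol B_{T-\eps}:\delta].
$$
Both inequalities rest on the assertion that in a proper length space every $y\in\ol B_{T+\eps}$ has some $z\in\ol B_T$ with $d(z,y)\le\eps$. Given $\eta>0$, take a rectifiable curve from $x_0$ to $y$ of length at most $d(x_0,y)+\eta$ and cut it at arc-length $T$ to obtain $z_\eta\in\ol B_T$ with $d(z_\eta,y)\le\eps+\eta$ (if $d(x_0,y)\le T$ just take $z_\eta=y$); compactness of $\ol B_T$, inherited from properness, then yields a limit $z\in\ol B_T$ with $d(z,y)\le\eps$. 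Inflating the radii of a minimal $\delta$-cover of $\ol B_T$ by $\eps$ now produces a $(\delta+\eps)$-cover of $\ol B_{T+\eps}$ with the same centres, and the second inequality follows by the same argument applied with $T-\eps$ and $T$ in place of $T$ and $T+\eps$.

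Given $\eta>0$, the small-overlaps condition furnishes a fixed $\delta>0$ and a $T_0$ with
$$
(1-\eta)\,b(T)\,\le\,[\ol B_T:\delta]\,b(\delta)\,\le\,(1+\eta)\,b(T)\qquad\text{for all } T\ge T_0.
$$
Chaining this with the covering inequality gives, for $T$ large,
$$
b(T+\eps)\,\le\,[\ol B_T:\delta]\,b(\delta+\eps)\,\le\,(1+\eta)\,\frac{b(\delta+\eps)}{b(\delta)}\,b(T),
$$
and symmetrically $b(T)\le (1+\eta)\,(b(\delta+\eps)/b(\delta))\,b(T-\eps)$ once $T-\eps\ge T_0$. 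Passing to $\limsup$ and $\liminf$ in $T$ produces
$$
\limsup_{T\to\infty}\frac{b(T+\eps)}{b(T)}\,\le\,(1+\eta)\,\frac{b(\delta+\eps)}{b(\delta)},\qquad \liminf_{T\to\infty}\frac{b(T-\eps)}{b(T)}\,\ge\,\frac{b(\delta)}{(1+\eta)\,b(\delta+\eps)}.
$$

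Since $\mu$ is Radon and $\ol B_{\delta+\eps}\searrow\ol B_\delta$, the function $b$ is right-continuous, so with $\delta$ and $\eta$ held fixed, letting $\eps\searrow 0$ gives $b(\delta+\eps)/b(\delta)\to 1$; then letting $\eta\searrow 0$ forces both regularity limits to equal $1$. The main obstacle is the covering inequality: it is the only step where the length-metric and properness assumptions are genuinely used, and the delicate point is the passage from the $\eta$-dependent bound $d(z_\eta,y)\le\eps+\eta$ to a clean bound $d(z,y)\le\eps$, which is exactly where one needs closed balls to be compact. Everything thereafter is a direct chaining of the defining estimate of small overlaps.
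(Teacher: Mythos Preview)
Your argument is correct and follows essentially the same route as the paper's proof: cover $\ol B_T$ by $\delta$-balls, use innerness to enlarge the cover to $\ol B_{T+\eps}$, apply the small-overlaps bound, and control the resulting ratio $b(\delta+\eps)/b(\delta)$ as $\eps\searrow 0$. The only notable difference is tactical: where you invoke properness to extract a limit point $z\in\ol B_T$ with $d(z,y)\le\eps$ exactly, the paper sidesteps compactness by introducing an auxiliary slack $\tau>0$, covering $\ol B_{T+\eps}$ by balls of radius $\delta+\eps+\tau$, and letting $\tau\to 0$ only at the very end; your explicit right-continuity argument for $b$ is left implicit in the paper's final ``$\alpha$ and $\tau$ are arbitrary'' step.
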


\begin{proof}
Let $\delta,\eps,\tau>0$.
Cover $\ol B_T(x_0)$ with balls of radius $\delta$ around the midpoints $x_1,\dots,x_k$.
As the metric is inner, the balls of radius $\delta+\eps+\tau$ around $x_1,\dots,x_k$  cover $\ol B_{T+\eps}(x_0)$. 
So we have
\begin{align*}
\frac{b(T+\eps)}{b(T)} &\le \frac{[\ol B_T:\delta]b(\delta+\eps+\tau)}{b(T)}.
\end{align*}
Let $\alpha>0$ be given and choose $\delta>0$ so small that for every $T\ge T_0$ one has 
$$
\left|\frac{[\ol B_T:\delta]b(\delta)}{b(T)}- 1\right|<\alpha.
$$
Then for $T\ge T_0$ we get
\begin{align*}
\frac{b(T+\eps)}{b(T)} 
&\le(1+\al)\frac{b(\delta+\eps+\tau)}{b(\delta)},
\end{align*}
so that
$$
1\le \lim_{\eps\to 0}\limsup\frac{b(T+\eps)}{b(T)}\le (1+\al)\frac{b(\delta+\tau)}{b(\delta)},
$$
and as $\al$ and $\tau$ are arbitrary, we get that the limit equals 1.
Analogously one gets $\lim_{\eps\to 0}\liminf_{T\to\infty}\frac{b(T-\eps)}{b(T)}=1$.
\end{proof}

The following lemma is of importance to our calculations.

\begin{lemma}[Persistence of the dominant asymptotic]\label{lemMeasure}
Let $\mu,\nu$ be locally finite measures on $[0,\infty)$ such that, for some $\al\ge 0$ and some $\beta>0$, as $T\to\infty$, one has
$$
\nu([0,T])\ \sim\ T^\al e^{\be T}
\quad
\text{and}\quad 
C=\int_{0,\infty)}e^{-\be x}\,d\mu(x)<\infty.
$$
Let
$$
D(T)=\{(x,y)\in\R^2: 0\le x,y\le x+y\le T\}.
$$
Then, as $T\to\infty$, we have
$$
\int_{D(T)}1\ d(\mu\otimes\nu)\ \sim\ C\ T^\al e^{\be T}.
$$
\end{lemma}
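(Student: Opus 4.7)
The plan is to reduce the $(\mu\otimes\nu)$-integral over the triangle $D(T)$ to a single integral against $d\mu$ via Fubini, apply the asymptotic hypothesis on $\nu$ on a ``main range,'' and control the short tail near the endpoint via the integrability of $e^{-\be x}$ against $d\mu$.

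First, since $D(T)=\{(x,y):0\le x\le T,\ 0\le y\le T-x\}$, Fubini gives
$$I(T)\;:=\;\int_{D(T)}1\,d(\mu\otimes\nu)\;=\;\int_0^T\nu([0,T-x])\,d\mu(x).$$
Fix $\eps\in(0,1)$ and choose $T_0>0$ so that $(1-\eps)s^\al e^{\be s}\le \nu([0,s])\le(1+\eps)s^\al e^{\be s}$ whenever $s\ge T_0$. Split $I(T)$ at $x=T-T_0$. For the main piece $x\in[0,T-T_0]$, the upper bound on $\nu$ together with $(T-x)^\al\le T^\al$ (using $\al\ge 0$) gives
$$\int_0^{T-T_0}\nu([0,T-x])\,d\mu(x)\le (1+\eps)T^\al e^{\be T}\int_0^{T-T_0}e^{-\be x}\,d\mu(x)\le (1+\eps)C\,T^\al e^{\be T}.$$
For the matching lower bound, fix an auxiliary $M>0$, restrict to $x\in[0,M]$, and use $(T-x)^\al\ge (T-M)^\al$ to get
$$I(T)\ge (1-\eps)(T-M)^\al e^{\be T}\int_0^M e^{-\be x}\,d\mu(x).$$
Dividing by $T^\al e^{\be T}$ and letting $T\to\infty$, then $M\to\infty$, then $\eps\to 0$ would yield $\lim I(T)/(T^\al e^{\be T})=C$, provided the tail piece is negligible.

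The tail piece $x\in[T-T_0,T]$ is bounded by $\nu([0,T-x])\le \nu([0,T_0])$ together with the crude estimate
$$\mu([T-T_0,T])\le e^{\be T}\int_{T-T_0}^T e^{-\be x}\,d\mu(x)\le e^{\be T}\int_{T-T_0}^\infty e^{-\be x}\,d\mu(x),$$
whose right-hand side is $e^{\be T}\cdot o(1)$ because $C<\infty$. Hence the tail is $o(e^{\be T})=o(T^\al e^{\be T})$, completing the argument. The only mildly delicate step is this tail estimate: the hypothesis supplies an asymptotic for $\nu$ but says nothing about $\mu$ individually near $T$, so the finiteness of $C$ is exactly what is needed to prevent $\mu$ from concentrating mass at the endpoint of the integration range. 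Everything else is a routine dominated-convergence-style manipulation.
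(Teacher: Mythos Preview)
Your proof is correct and follows essentially the same strategy as the paper: apply Fubini to write $I(T)=\int_0^T\nu([0,T-x])\,d\mu(x)$, split the $\mu$-integral into a main range where the asymptotic for $\nu$ applies and a tail, and control the tail using the finiteness of $C=\int e^{-\be x}\,d\mu$. The only cosmetic difference is that the paper splits at $t=T_0$ (leading to a three-piece decomposition $a+b+c$ with a picture) while you split at $x=T-T_0$ and introduce an auxiliary $M$ for the lower bound; your arrangement is slightly more streamlined but the substance is identical.
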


\begin{proof}
First note that the assumptions imply that $\mu([0,T])e^{-\be T}$ tends to zero as 
$T\to\infty$.
This is seen as follows: Let $0<S<T$, then
\begin{align*}
\mu([0,T])e^{-\be T}
&=\int_{[0,T]} \frac{e^{-\be t}}{e^{\be{T-t}}}\,d\mu(t)\\
&\le \frac1{e^{\be(T-S)}}\int_{[0,S]}e^{-\be t}\,d\mu(t)+\int_{(S,T]}e^{-\be t}\,d\mu(t).
\end{align*}
This implies that
$$
\limsup_{T\to\infty}
\mu([0,T])e^{-\be T}\le \int_{(S,\infty)}e^{-\be t}\,d\mu(t).
$$
As the left had side does not depend on $S$, it is zero.
Next write $d(T)=\int_{D(T)}d(\mu\otimes\nu)$.
Let $T_0>0$ be so large that for every $T\ge T_0$ one has
$$
\left|\frac{\nu([0,T])}{T^\al e^{\be T}}-1\right|<\eps.
$$
For $T>2T_0$ we have
\begin{align*}
\frac{d(T)}{T^\al e^{\be T}}
&=\int_{[0,T]}\frac{\nu([0,T-t])}{T^\al e^{\be T}}\,d\mu(t)\\
&=\int_{[0,T_0]}\frac{\nu([0,T-t])}{T^\al e^{\be T}}\,d\mu(t)
+\int_{(T_0,T]}\frac{\nu([0,T-t])}{T^\al e^{\be T}}\,d\mu(t)\\
&=\underbrace{\int_{[0,T_0]}\frac{\nu([0,T-t])}{T^\al e^{\be T}}\,d\mu(t)}_{=a(T,T_0)}
+\underbrace{\int_{(T_0,T]}\frac{\nu([0,T_0])}{T^\al e^{\be T}}\,d\mu(t)}_{=b(T,T_0)}\\
&\quad +\underbrace{\int_{(T_0,T]}\frac{\nu((T_0,T-t])}{T^\al e^{\be T}}\,d\mu(t)}_{=c(T,T_0)}.
\end{align*}
These three summands are obtained by integrating $\mu\otimes\nu$ over the areas $A,B$ and $C$ as in the following picture, which is why we call them $a,b,c$.
\begin{center}
\begin{tikzpicture}
\draw(0,5)--(0,0)--(7,0);
\draw(0,4)--(4,0);
\filldraw[gray](3,1)--(1,1)--(1,3)--(3,1);
\draw(1,-.1)--(1,.1);
\draw(1,-.4)node{$T_0$};
\draw(4,-.4)node{$T$};
\draw(1.5,1.5)node{$A$};
\draw(1,0)--(1,3);
\draw(.5,1.5)node{$C$};
\draw(2,.5)node{$B$};
\draw(6,-1)node{$\mu$};
\draw(-1,3)node{$\nu$};
\draw(-.1,1)--(.1,1);
\draw(-.5,1)node{$T_0$};
\draw(-.5,4)node{$T$};
\draw(1,1)--(3,1);
\end{tikzpicture}
\end{center}
The summand $b(T,T_0)$ tends to zero as $T\to\infty$ by the first part of the proof.
The first summand is
\begin{align*}
a(T,T_0)&=\int_{[0,T_0]}\frac{\nu([0,T-t])}{(T-t)^\al e^{\be (T-t)}}\frac{(T-t)^\al e^{\be (T-t)}}{T^\al e^{\be T}}\,d\mu(t)\\
&\le (1+\eps)\int_{[0,T_0]}\frac{(T-t)^\al e^{\be (T-t)}}{T^\al e^{\be T}}\,d\mu(t)\\
&=(1+\eps)\int_{[0,T_0]}e^{-\be t}\frac{(T-t)^\al}{T^\al}\,d\mu(t)\\
&\le (1+\eps)\int_{[0,T_0]}e^{-\be t}\,d\mu(t),
\end{align*}
so that
$$
\limsup_{T\to\infty}a(T,T_0)\le (1+\eps)\int_{[0,T_0]}e^{-\be t}\,d\mu(t).
$$
On the other hand we have that
\begin{align*}
a(T,T_0)&\ge (1-\eps) \int_{[0,T_0]}e^{-\be t}\frac{(T-t)^\al}{T^\al}\,d\mu(t)\\
&\ge (1-\eps) \int_{[0,T_0]}e^{-\be t}\,d\mu(t)\frac{(T-T_0)^\al}{T^\al}
\end{align*}
so that
$$
\liminf_{T\to\infty}a(T,T_0)\ge (1-\eps)\int_{[0,T_0]}e^{-\be t}\,d\mu(t).
$$
As for the third summand we have
\begin{align*}
c(T,T_0)
&=\int_{(T_0,T-T_0]}\frac{\nu((T_0,T-t])}{T^\al e^{\be T}}\,d\mu(t)\\
&\le \int_{(T_0,T-T_0]}\frac{\nu([0,T-t])}{T^\al e^{\be T}}\,d\mu(t)\\
&= \int_{(T_0,T-T_0]}e^{-\be t}\frac{\nu([0,T-t])}{(T-t)^\al e^{\be (T-t)}}
\frac{(T-t)^\al}{T^\al}\,d\mu(t)\\
&\le (1+\eps)
\int_{(T_0,T-T_0]}e^{-\be t}
\frac{(T-t)^\al}{T^\al}\,d\mu(t)\\
&\le (1+\eps)\int_{[T_0,\infty)}e^{-\be t}\,d\mu(t).
\end{align*}
Summing up, we get
\begin{align*}
(1-\eps)\int_{[0,T_0]}e^{-\be t}\,d\mu(t)&\le\liminf_{T\to\infty}\frac{d(T)}{T^\al e^{\be T}}\\
&\le \limsup_{T\to\infty}\frac{d(T)}{T^\al e^{\be T}}
\le (1+\eps)\int_{[0,\infty)}e^{-\be t}\,d\mu(t)
\end{align*}
Letting $\eps$ tend to zero and $T_0$ to infinity accordingly, we get
$$
\lim_{T\to\infty}\frac{d(T)}{T^\al e^{\be T}}=\int_{[0,\infty)}e^{-\be T}\,d\mu(t).
$$
The lemma follows.
\end{proof}

\begin{corollary}\label{Cor2.6}
Let the setting be as in the lemma, except that we only have 
$$
\nu([0,T])\ll T^\al e^{\be T}
$$
as $T\to\infty$.
Then we get
$$
\int_{D(T)}1\ d(\mu\otimes\nu)\ \ll\ C\ T^\al e^{\be T}
$$
and the same for $\gg$.
\end{corollary}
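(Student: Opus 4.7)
The plan is to reuse the proof of Lemma \ref{lemMeasure} essentially verbatim, observing that the two-sided asymptotic $\nu([0,T])\sim T^\al e^{\be T}$ was invoked only through the one-sided inequalities $|\nu([0,T])/(T^\al e^{\be T})-1|<\eps$, whose two directions are used independently at different places in the argument. Under the weaker hypothesis $\nu([0,T])\ll T^\al e^{\be T}$ I retain only the upper inequality $\nu([0,T])\le M\,T^\al e^{\be T}$ for $T\ge T_0$; under $\nu([0,T])\gg T^\al e^{\be T}$ I retain only $\nu([0,T])\ge m\,T^\al e^{\be T}$ for $T\ge T_0$. Crucially the integrability $C=\int e^{-\be x}\,d\mu(x)<\infty$ is still assumed, and it alone sufficed to establish the auxiliary fact $\mu([0,T])e^{-\be T}\to 0$ at the start of the lemma, so that step carries over unchanged.

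For the $\ll$ direction I recycle the decomposition $d(T)=a(T,T_0)+b(T,T_0)+c(T,T_0)$ of the lemma. The $b$-summand tends to zero by the unaltered tail argument on $\mu$. For the other two summands the same chain of inequalities now goes through with the factor $(1+\eps)$ replaced by $M$, giving
\[ a(T,T_0)\le M\int_{[0,T_0]}e^{-\be t}\,d\mu(t),\qquad c(T,T_0)\le M\int_{(T_0,\infty)}e^{-\be t}\,d\mu(t). \]
Adding these over the complementary subsets of $[0,\infty)$ yields $a(T,T_0)+c(T,T_0)\le MC$ uniformly in $T\ge 2T_0$, hence $\limsup_{T\to\infty}d(T)/(T^\al e^{\be T})\le MC$, which is exactly $d(T)\ll C\,T^\al e^{\be T}$.

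For the $\gg$ direction only a lower bound on $d(T)$ is required, so I discard $b$ and $c$ altogether and keep the $a$-piece. For $T\ge 2T_0$ one has $T-t\ge T_0$ when $t\in[0,T_0]$, so the hypothesis applies and gives
\[ d(T)\ge\int_{[0,T_0]}\nu([0,T-t])\,d\mu(t)\ge m\,\frac{(T-T_0)^\al}{T^\al}\,T^\al e^{\be T}\int_{[0,T_0]}e^{-\be t}\,d\mu(t). \]
Letting $T\to\infty$ with $T_0$ fixed collapses $(T-T_0)^\al/T^\al$ to $1$, so $\liminf d(T)/(T^\al e^{\be T})\ge m\int_{[0,T_0]}e^{-\be t}\,d\mu(t)$. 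Then letting $T_0\to\infty$ makes the remaining integral rise to $C$, yielding $\liminf d(T)/(T^\al e^{\be T})\ge mC$ and hence $d(T)\gg C\,T^\al e^{\be T}$.

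There is no genuine obstacle here: each step in the proof of Lemma \ref{lemMeasure} was already one-sided at its point of use, so isolating the two directions of the asymptotic is a mechanical translation. The only points requiring mild care are (i) noting that the tail estimate $\mu([0,T])e^{-\be T}\to 0$ depends only on the $\mu$-hypothesis and is therefore still available, and (ii) observing that in the $\gg$ direction one may throw away the $b$- and $c$-contributions entirely, which circumvents any need for a lower bound on $\nu$ on sets of the form $(T_0,T-t]$ as opposed to $[0,T-t]$.
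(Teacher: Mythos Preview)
Your proposal is correct and is exactly what the paper intends: its proof reads simply ``Inspection of the proof of the lemma,'' and you have carried out that inspection in detail, isolating the one-sided uses of the $\nu$-asymptotic and noting that the $\mu$-tail estimate and the nonnegativity of $b$ and $c$ are all that is additionally needed. There is nothing to add.
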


\begin{proof}
Inspection of the proof of the lemma.
\end{proof}

\section{The adelic metric space}
\begin{definition}
Let $\G$ denote a semisimple linear algebraic group over $\Q$.
For a ring extension $R/\Q$ we write $\G_R$ for the group of $R$-valued points.
By $p$ we denote either a prime number, in which case we write $p<\infty$ or $p=\infty$.
In both cases we say that $p$ is a \e{place} of $\Q$.
We write $\Q_p$ for the completion of $\Q$ at $p$, so for instance $\Q_\infty=\R$.
\end{definition}

\begin{definition}
Let $\A=\A_\fin\times\A_\infty$ denote the adele ring of $\Q$, where $\A_\infty=\R$ and $\A_\fin$ is the ring of finite adeles, i.e., the restricted product of all $\Q_p$ with $p<\infty$.
Writing $\G_\fin=\G_{\A_\fin}$ and $\G_\infty=\G_{\R}$, we have $G=\G_\A=\G_\fin\times \G_\infty$.
We abbreviate  $\G_{\Q_p}$ by $\G_p$.
Fix an embedding $\G\hookrightarrow \GL_n$ over $\Q$.
For every $p<\infty$ set $\G_{\Z_p}= \G_p\cap\GL_n(\Z_p)$.
For each $p\le\infty$ we fix a maximal compact subgroup $K_p$ of $\G_p$ in such a way that 
\begin{itemize}
\item if $p<\infty$, then $K_p$ is special, i.e., it fixes a special vertex in the Bruhat-Tits building,
\item for almost all $p$, the group $K_p$ is hyperspecial and we have $K_p=\G_{\Z_p}$.
\end{itemize}
For the possibility of such a choice, see \cite{TitsCorv}*{3.9.1}.
We need to fix Haar measures.
For every $p\le\infty$ we normalize the Haar measure on $K_p$ by insisting that 
$\vol(K_p)=1$.
For $p<\infty$ this already fixes the Haar measure on $G_p$.
On  $\G_\infty$ any Haar measure is induced from a $\G_\infty$-invariant Riemannian metric on the symmetric space $X_\infty=\G_\infty/K_\infty$, which we choose to be the one given by the Killing form $b$.
\end{definition}

\begin{definition}
For every $p<\infty$ we denote by $\CB_p$ the Bruhat-Tits building of  the group $\G_p$.
The building $\CB_p$ carries a metric which is euclidean on each apartment.
For arithmetic purposes it has turned out that, as far as only vertices of the building are concerned, the combinatorial distance is more useful, see \cites{Manin,Werner}.
This is a metric on the set $\CB_{p,0}$ of all vertices of the building defined by considering the graph $\CB_{p,1}$, which is the 1-skeleton of $\CB_p$.
For two vertices $x,y$ of $\CB_p$ let $d_p(x,y)$ denote the combinatorial distance in $\CB_{p,1}$, i.e., the length of the shortest path joining $p$ and $q$, where each edge gets the length 1.
Note that the group $K_p$ fixes a unique vertex $o_p$ in $\CB_p$ and in this way we get a canonical injection $\G_p/K_p\hookrightarrow \CB_{p,0}$, $xK_p\mapsto xo_p$.
We will denote the point $xo_p\in\CB_p$ also by $xK_p$.
\end{definition}

\begin{definition}\label{Defmetrics}
Also for $p=\infty$, there is a metric on the symmetric space $X_\infty=\G_\infty/K_\infty$. However, as at the finite places, it turns out, that for our purposes  a different metric is more useful. 
First observe that for given choice of a maximal compact subgroup $K_\infty$, there exists a maximal connected $\R$-split torus $A$ in $G$ such that $\theta(a)=a^{-1}$ for every $a\in A$, where $\theta$ is the Cartan involution on $G$, fixing $K_\infty$ pointwise.
Let $\a$ be the Lie algebra of $A$ and let $\Phi\subset\a^*$ be the set of roots of $(\a,\g)$, where $\g$ is the Lie algebra of $\G_\infty$. 
For each $\al\in\Phi$ let $m_\al$ denote the dimension of the coresponding root space $\g_\al$.
Fix a choice of positve roots $\Phi^+\subset\Phi$ and let $\rho=\frac12\sum_{\al\in\Phi^+}m_\al\al$ be the modular shift.
Choose some $B>0$ and set
$$
\norm X_{B}=\frac1{2B}\sum_{\al\in\Phi^+}m_\al|\al(X)|.
$$
Then $\norm._{B}$ is a Weyl group invariant norm on $\a$.
For $X\in\a^+$, the positive Weyl chamber given by the ordering, we get $\norm X_{B}=\frac{\rho(X)}{B}$.

We define a $\G_\infty$-invariant metric $d_\infty$ on $X_\infty$ by setting 
$$
d_\infty(xK_\infty,yK_\infty)=\tilde d_{\infty}(y^{-1}xK),
$$
where the one argument function $\tilde d_\infty$ is defined by
$$
\tilde d_\infty(K\exp(X)K)=\norm{X}_{B},\qquad X\in\a.
$$
\end{definition}

\begin{definition}
On the group $\G_p$, $p\le\infty$, we now define the local height $h_p:\G_p\to [1,\infty)$ by the formula
$$
h_p(x)=\begin{cases}p^{d_p(xK_p,K_p)}&p<\infty,\\ e^{d_\infty(xK_\infty,K_\infty)}& p=\infty.\end{cases}
$$
The global height on $\G_\A$ is defined by
$$
h(x)=\prod_{p\le\infty}h_p(x).
$$
We denote the corresponding counting function by $\pi$. So
$$
\pi(x)=\#\{\ga\in\G_\Q:h(\ga)\le x\}.
$$
Let $X_\infty$ be the symmetric space $\G_\infty/K_\infty$ and for $p<\infty$ set $X_p=\CB_{p,1}$.
Let
$$
X=\bigoplus_{p\le\infty}X_p
$$
denote the set of all $x\in \prod_{p\le\infty}X_p$ such that $x_p=K_p$ holds for almost all $p$.
Let $x\in X$ then the stabilizer $K_x$ of $x$ in $\G_\A$ is a maximal compact subgroup of $\G_\A$ which stabilizes no other point, hence the map $x\mapsto K_x$ is an injection of $X$ into the set of all maximal compact subgroups of $\G_\A$. We occasionally denote elements of $X$ as compact subgroups of $\G_\A$.
In particular, the group $K_\A=\prod_{p\le\infty}K_p$ defines a base-point in $X$.
We can identify $X$ with $\G_\A/K_\A$.
\end{definition}

\begin{proposition}\label{lem3.1}
For $p=\infty$ we formally write $\log p=\log e=1$.
The sum
$$
d(x,y)=\sum_{p\le\infty}d_p(x_p,y_p)\log p
$$
is finite for each given pair $x, y\in X$ and defines a metric on $X$ which makes $X$ a complete and proper metric space.
The natural action of $\G_\A$ in $X$ is isometric and the metric topology equals the topology induced by $X\cong \G_\A/K_\A$.
Further, the $\G_\infty$-invariant measure on $\G_\infty/K_\infty$ and the counting measures on the $X_p$ for $p<\infty$ give a $G$-invariant, non-trivial Radon measure on $X$, which coinsides with the quotient of the Haar measures on $\G_\A$ and $K_\A$.
\end{proposition}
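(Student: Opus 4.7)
The plan is to verify each assertion componentwise, exploiting the restricted-direct-product structure of $X$. \emph{Finiteness and the metric axioms:} for $x,y\in X$ both $x_p=K_p$ and $y_p=K_p$ hold for all but finitely many $p$, so $d_p(x_p,y_p)=0$ outside a finite set and the defining sum is automatically finite. Symmetry, the triangle inequality, and non-degeneracy transfer termwise from the local metrics $d_p$; isometry of the $\G_\A$-action is immediate since each $\G_p$ acts isometrically on $X_p$ by construction.

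The substantive claims are completeness and properness. For completeness, given a Cauchy sequence $(x^{(n)})$ in $X$, pick $N$ with $d(x^{(n)},x^{(m)})<\log 2$ for all $n,m\ge N$. At each finite place $p$, the integer $d_p(x_p^{(n)},x_p^{(m)})$ satisfies $d_p(x_p^{(n)},x_p^{(m)})\log p<\log 2\le \log p$, forcing $d_p(x_p^{(n)},x_p^{(m)})=0$; hence $x_p^{(n)}$ stabilizes for $n\ge N$, and one sets $x_p:=x_p^{(N)}$, which equals $K_p$ for almost all $p$ since $x^{(N)}\in X$. At the infinite place, $(x_\infty^{(n)})$ is Cauchy in $(X_\infty,d_\infty)$; because $\norm{\cdot}_B$ on $\a$ is equivalent to the Euclidean norm, the Cartan decomposition $K_\infty\exp(\a)K_\infty$ identifies $d_\infty$-bounded sets with images of compact sets, so $X_\infty$ is proper and hence complete, and the sequence converges to some $x_\infty$. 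Assembling $x=(x_p)$ and estimating $d(x^{(n)},x)$ place by place yields $x^{(n)}\to x$ in $X$. For properness, the closed ball $\ol B_R(x_0)$ forces $d_p(y_p,x_{0,p})\le R/\log p$ at each finite $p$; for primes with $\log p>R$ this bound lies below $1$, so $y_p=x_{0,p}$ and only finitely many places are non-trivially constrained. At those places $y_p$ ranges over a combinatorial ball in the locally finite Bruhat-Tits building, a finite set, while the infinite component ranges over a $d_\infty$-closed ball in $X_\infty$, compact by the above. Thus $\ol B_R(x_0)$ is homeomorphic to a product of finitely many compact factors, hence compact.

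For the topology, a metric ball of radius $r<\log 2$ around $x_0$ fixes all finite components to $x_{0,p}$ and cuts out the $d_\infty$-ball of radius $r$ at infinity, which is a basic neighborhood in the restricted-direct-product topology on $\G_\A/K_\A$; the converse containment follows by refining at each of the finitely many relevant primes using local continuity of the quotient map. For the measure, the normalization $\vol(K_p)=1$ makes the quotient Haar measure on $\G_p/K_p$ equal to counting measure on $X_p$ when $p<\infty$ and equal to the Killing-form invariant measure on $X_\infty$, and Fubini on the restricted direct product identifies the product of these with the quotient of the adelic Haar measure by $K_\A$. The main obstacle throughout is the interplay between the restricted-product structure of $X$ and the metric: one must use the integrality of combinatorial distances together with the bound $\log p\ge \log 2$ to dispose of all but finitely many places at once, both when showing that Cauchy sequences stabilize finitely and when matching the metric topology with the quotient topology.
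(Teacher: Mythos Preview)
Your proof is correct and follows essentially the same approach as the paper: both arguments use the threshold $\log 2$ and the integrality of the combinatorial distances to freeze all finite components of a Cauchy sequence, reducing completeness to that of $X_\infty$, and both prove properness by noting that $d_p\le R/\log p<1$ forces $y_p=x_{0,p}$ for all primes with $\log p>R$, leaving only a finite product of compact factors. You in fact supply more detail than the paper does for the topology and measure claims, which it dismisses as clear.
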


\begin{proof}
To show completeness, let $(x_i)$ be a Cauchy sequence and let $i_0\in\N$ be such that for all $i,j\ge i_0$ we have $d(x_i,x_j) < \log 2$. Then for any given $p<\infty$ we have
$$
d_p(x_{i,p},x_{j,p})\log p\le \sum_{q\le\infty} d_q(x_{i,q},x_{j,q})\log q = d(x_i,x_j) < \log 2
$$
So $d_p(x_{i,p}, x_{j,p}) < \frac{\log 2}{\log p} \le 1$ and therefore $d_p(x_{i,p}, x_{j,p}) = 0$, which amounts to $x_{i,p} = x_{j,p}$ for all $i,j\ge i_0$.
Since this holds for every prime $p<\infty$, the completeness follows from the completeness of the metric space $X_\infty$.

For properness, it suffices to show that every closed Ball $B_R(K)$ for given $R>1$ is compact.
For this let $P$ denote the set of all places $p$ such that $\log p\le R$. Then the set $P$ is finite.
Let $x\in B_R(K)$, then for each $p\notin P$ we have
$$
d_p(x_pK_p,K_p)\log p\le d(xK,K)\le R.
$$
Therefore $d_p(x_pK_p,K_p)<1$ hence it is zero. So the ball $B_R(K)$ is homoemorphic to the corresponding ball in $\prod_{v\in P}X_p$ which  is compact.
The rest of the lemma is clear.
\end{proof}

\section{Lattice points of $\Q$-groups}\label{Sec2}

We make the following assumptions.

\begin{enumerate}[\rm (a)]
\item We assume that $\G$ is connected and  almost $\Q$-simple.
\item We assume $\G_\R$ is not compact and the group homomorphism $\tilde\G_\A\to\G_\A$ is surjective, where $\tilde\G$ is the simply connected covering group of $\G$.
\end{enumerate}

The second part of condition (b) is satisfied if $\G$ is itself simply connected, but also in some other cases like $\G=\PGL_n$, see Section \ref{PGLn}.

\begin{definition}
By an \e{automorphic character} of $\G$ we mean a continuous group homomorphism $\chi:G=\G_\A\to \T=\{z\in\C:|z|=1\}$ such that $\chi(\Ga)=1$, where $\Ga=\G_\Q$.
Since $\vol(\G_\Q\bs\G_\A)<\infty$, it follows that $\chi\in L^2(\G_\Q\bs\G_\A)$.
Let $L_{00}^2(\Ga\bs G)$ denote the orthogonal space of all automorphic characters.
\end{definition}

We formulate another condition
\begin{itemize}
\item[(b')] With $G=\G_\A$ and $K=K_\A$ we have
$$
L_0^2(\Ga\bs G)^K\subset L_{00}^2(\Ga\bs G).
$$
\end{itemize}

\begin{lemma}\label{lem3.2}
If $\G$ satisfies (a) and (b), then it satisfies (b').
\end{lemma}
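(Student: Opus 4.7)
The plan is to first perform a reduction and then pull back to the simply connected cover to exploit strong approximation. For the reduction, let $f\in L_0^2(\Ga\bs G)^K$ and let $\chi$ be an automorphic character. By $K$-invariance of $f$ and right-invariance of the measure on $\Ga\bs G$ (with the substitution $y=xk$ inside the inner product) one readily computes
\[
\sp{f,\chi}\ =\ \int_K\chi(k)\,dk\cdot\sp{f,\chi}.
\]
Since $\chi|_K$ is a continuous character of the compact group $K$, the factor $\int_K\chi(k)\,dk$ vanishes unless $\chi|_K\equiv 1$. If $\chi$ is the trivial automorphic character then $\sp{f,\chi}=\int f=0$ because $f\in L_0^2$. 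Thus the whole content of (b') is the assertion that no \emph{nontrivial} automorphic character is trivial on $K$.

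Next, assume such a $\chi$ exists and pull it back along the surjection $\pi\colon\tilde\G_\A\to\G_\A$ granted by condition (b), setting $\tilde\chi=\chi\circ\pi$. Then $\tilde\chi$ is a continuous character of $\tilde\G_\A$, trivial on $\tilde\G_\Q$ (since $\pi(\tilde\G_\Q)\subseteq\G_\Q$), and by the surjectivity of $\pi$ it suffices to show $\tilde\chi\equiv 1$. Under (a) and (b) the group $\tilde\G$ is simply connected, almost $\Q$-simple, and $\tilde\G(\R)$ is noncompact (the latter because $\tilde\G\to\G$ is a central isogeny and $\G_\R$ is noncompact), so the Kneser--Platonov strong approximation theorem applies and gives that $\tilde\G_\Q$ is dense in $\tilde\G(\A_\fin)$. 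Consequently $\tilde\G_\Q\cdot\tilde\G(\R)$ is dense in $\tilde\G_\A$, and since $\tilde\chi$ is continuous and kills $\tilde\G_\Q$, it is determined by its restriction to $\tilde\G(\R)$.

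The final step is to show $\tilde\chi|_{\tilde\G(\R)}\equiv 1$. This is the essential use of simple connectedness: by a classical theorem of E.~Cartan, the group $\tilde\G(\R)$ of real points of a simply connected semisimple $\R$-algebraic group is connected in the Euclidean topology. Its Lie algebra is semisimple and hence perfect, so $\tilde\G(\R)=[\tilde\G(\R),\tilde\G(\R)]$, and any continuous homomorphism into the abelian group $\T$ must therefore vanish on it. Chaining the reductions together yields $\tilde\chi\equiv 1$, hence $\chi\equiv 1$, which proves (b'). The main obstacle is this last ingredient: both halves of (b) are genuinely used, the surjectivity to permit the lift and the noncompactness of $\tilde\G(\R)$ to make strong approximation available, while without simple connectedness the connectedness of $\tilde\G(\R)$ could fail and $\tilde\G(\R)$ might admit nontrivial characters into $\T$ through a finite component group.
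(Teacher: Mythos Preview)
Your proof is correct and follows essentially the same route as the paper: pass to the simply connected cover, invoke strong approximation, and conclude by showing the character is trivial on the real points. The paper reduces to $\tilde\G$ first and then uses the form $\tilde\G_\A=\tilde\G_\Q\tilde\G_\R K$ together with $K$-invariance of $\chi$ and the fact that $K_\infty$ meets every component of $\tilde\G_\R$ (citing \cite{Gorodnik}*{Lemma 4.7} for $\chi(\tilde\G_\R^0)=1$); you instead pull back $\chi$ along the surjection, use the density form of strong approximation, and dispose of the real factor via Cartan's connectedness theorem plus perfectness of connected semisimple Lie groups. One small remark: your second half never uses the hypothesis $\chi|_K\equiv 1$, so you have in fact shown that under (a) and (b) there are no nontrivial automorphic characters of $\G_\A$ at all, making your initial reduction superfluous (though not wrong).
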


\begin{proof}
Suppose we know the claim in the case $\tilde\G=\G$.
If  $\tilde\G_\A\to\G_\A$ is onto, then every automorphic character of $\G_\A$ induces one of $\G_\A$ and so one sees that the space $L_0^2(\Ga\bs G)$ equals the image of $L_0^2(\tilde\Ga\bs \tilde G)$ and the same holds for $L_{00}^2$ so if $L_0^2(\tilde\Ga\bs \tilde G)^K\subset L_{00}^2(\tilde\Ga\bs \tilde G)$, the same follows for $G$.
So assume that $\G$ is simply connected.
By strong approximation \cite{Rapin} one then has $\G_\A=\G_\Q\G_\R K$. 
So if $\chi$ is a $K$-invariant automorphic character, then $\chi(\G_\A)=\chi(\G_\R)$.
By Lemma 4.7 of \cite{Gorodnik} we have $\chi(\G_\R^0)=1$ and $K\cap \G_\R$ meets every connected component of $\G_\R$, hence $\chi$ is trivial in this case.
\end{proof}

\begin{lemma}\label{lem4.3}
Let $\G$ satisfy the conditions (a) and (b'). Let $G=\G_\A$ as well as $\Ga=\G_\Q$ and $K=K_\A$.
Then the group $G$ acts $K$-mixingly on $\Ga\bs G$.
\end{lemma}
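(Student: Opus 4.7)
The strategy is to combine condition (b') with the Howe--Moore theorem on decay of matrix coefficients for semisimple groups.

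Let $v, w \in L_0^2(\Ga\bs G)^K$ and set $\psi(g) = \sp{R(g) v, w}$. By (b') we have $v, w \in L_{00}^2(\Ga\bs G)$, i.e.\ both vectors are orthogonal to every automorphic character. Decompose $L_{00}^2(\Ga\bs G)$ as a direct integral of irreducible unitary subrepresentations $\pi$ of $G = \G_\A$, and split $v, w$ accordingly. Using the uniform bound $|\sp{\pi(g) v_\pi, w_\pi}| \le \|v_\pi\|\|w_\pi\|$ together with dominated convergence, the claim reduces to showing that for each irreducible $\pi \subset L_{00}^2$ with $\pi^K \ne 0$, every matrix coefficient between $K$-fixed vectors of $\pi$ vanishes at infinity on $G$.

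By Flath's theorem, $\pi = \bigotimes'_p \pi_p$ factorises as a restricted tensor product over the places of $\Q$. Such a $\pi$ is automatically infinite-dimensional, because a one-dimensional unitary subrepresentation of $L^2(\Ga\bs G)$ would be spanned by an automorphic character, which is excluded from $L_{00}^2$ by construction. The crucial refinement is that no local factor $\pi_p$ can be the trivial representation of $\G_p$ at a place $p$ where $\G_p$ is non-compact: were $\pi_{p_0}$ trivial, then any $0 \ne \phi \in \pi^K$ would be invariant under the subgroup $\Ga \cdot \G_{p_0} \cdot K_\A$ of $G$. Strong approximation applied to the simply-connected cover $\tilde\G$ at the non-compact place $p_0$ would then force the class of $\phi$, after passage through the finite-index image of $\tilde\G_\A \to \G_\A$, to be a finite sum of automorphic characters, contradicting $\phi \in L_{00}^2 \sm \{0\}$ by (b').

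Once every $\pi_p$ at a non-compact place is a non-trivial irreducible unitary representation of the semisimple group $\G_p$, the local Howe--Moore theorem for semisimple groups over local fields gives matrix-coefficient decay at infinity on each such $\G_p$. Assembling via the product formula $\sp{\pi(g) v, w} = \prod_p \sp{\pi_p(g_p) v_p, w_p}$ (on pure tensors, extended by density) and noting that $g_n \to \infty$ in $\G_\A$ forces the projection $g_{n,p}$ to escape every compact subset of $\G_p$ for at least one non-compact local factor (in the restricted-product topology, up to extraction of a subsequence), we obtain $\psi(g_n) \to 0$.

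\emph{Main obstacle.} The core technical step is ruling out trivial local factors at non-compact places, which hinges on strong approximation (via the simply-connected cover) together with condition (b') to exclude any contribution of automorphic characters. Without this step, the matrix coefficient would persist as a non-zero constant along a non-compact direction in $\G_\A$. The remaining ingredients --- Flath's factorisation, local Howe--Moore, and the assembly via the restricted-product topology --- are standard but require care with the double limit along sequences that escape to infinity simultaneously at finitely many places.
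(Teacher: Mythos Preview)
The paper's proof is a one-line citation: once condition (b') places the $K$-fixed vectors $v,w$ into $L_{00}^2(\Ga\bs G)$, the decay of the matrix coefficient is exactly the content of Theorem~3.26 of Gorodnik--Maucourant--Oh, which is quoted as a black box. Your proposal, by contrast, is an attempt to reprove that theorem from local Howe--Moore. The ingredients you list are the right ones, but the sketch skips the step that carries the actual weight.

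The concrete gap is in your assembly step. It is \emph{not} true that $g_n\to\infty$ in $\G_\A$ forces, even after passing to a subsequence, some fixed local projection $g_{n,p}$ to leave every compact set of $\G_p$. Take $p_n$ to be the $n$-th prime and let $g_n$ be the identity at every place except $p_n$, where $g_{n,p_n}$ lies at combinatorial distance $1$ from $K_{p_n}$. Then $g_n\to\infty$ in the restricted product topology, yet for every fixed place $p$ the sequence $(g_{n,p})$ is eventually in $K_p$. For such sequences the matrix coefficient equals the single local spherical value $\phi_{\pi_{p_n}}(g_{n,p_n})$, and getting this to tend to $0$ requires a \emph{uniform} (in $p$) quantitative bound on spherical functions of the unramified local components---essentially a spectral-gap/property~$(\tau)$ input. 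That uniformity is precisely what the Gorodnik--Maucourant--Oh argument supplies and what your sketch omits; invoking local Howe--Moore place by place cannot close this case.

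A secondary imprecision: your strong-approximation argument for excluding a trivial local factor $\pi_{p_0}$ treats a vector $\phi\in\pi^K$ as an honest function on $\Ga\bs G$. That is only literally available for constituents of the discrete spectrum; in the direct integral over the continuous spectrum the fibres are not realised inside $L^2(\Ga\bs G)$, so the sentence ``$\phi$ would be invariant under $\Ga\cdot\G_{p_0}\cdot K_\A$'' has no direct meaning there. This can be repaired (e.g.\ via the explicit Eisenstein description of the continuous spectrum, or by arguing with weak containment), but it needs to be said. Also, the image of $\tilde\G_\A\to\G_\A$ is in general not of finite index; only condition~(b), not~(b'), guarantees surjectivity, so that phrase should be reworked.
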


\begin{proof}
Let $L_{00}^2(\Ga\bs G)$ denote the orthogonal space of all automorphic characters and let $\psi$ be a $K$-invariant matrix coefficient.
By condition (b') we can write $\psi=\psi_{v,w}$ for some $v,w\in L_{00}^2$ and then the claim follows from \cite{Gorodnik}*{Theorem 3.26}.
\end{proof}

\begin{definition}
Let $\G$ be a semisimple linear algebraic group.
For a prime number $p$ and $k\in\N_0$ let $D(p^k)$ denote the number of vertices $v$ of $\CB_p$ in the $G_p$-orbit of $K_p$ such that the distance $d_p(v,K_p)$ is $k$.
For a natural number $m$ with prime decomposition $m=p_1^{k_1}\cdots p_t^{k_t}$ let 
$$
D(m)=D(p_1^{k_1})\cdots D(p_t^{k_t}).
$$
\end{definition}

\begin{lemma}
Let $\G$ be a semisimple linear algebraic group over $\Q$ of absolute rank $r$.
Then the Dirichlet series
$$
L(s)=\sum_{m=1}^\infty\frac{D(m)}{m^s}
$$
has a finite abscissa of convergence $B_0<\infty$.
\end{lemma}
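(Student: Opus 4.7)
My plan is to establish a polynomial bound $D(m)\le m^{A'}$ with a constant $A'=A'(\G)$ independent of $m$, which yields convergence of $L(s)$ for $\Re s>A'+1$ by direct comparison with the Riemann zeta function. The coefficient function $D$ is multiplicative by construction --- disjoint sets of prime factors of $m$ correspond to disjoint sets of places, and the local counts $D(p^k)$ appear as independent factors in $D(m)$ --- so it suffices to bound the prime-power values $D(p^k)$.

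For these, I would note that $D(p^k)$ is at most the number of vertices of the $1$-skeleton $\CB_{p,1}$ at graph distance $k$ from $K_p$. Since $\CB_{p,1}$ is a locally finite graph with maximum vertex degree $\Delta_p$, the elementary counting bound gives $D(p^k)\le \Delta_p(\Delta_p-1)^{k-1}\le \Delta_p^k$ (with value $1$ for $k=0$). The essential input is then a uniform bound
$$\Delta_p \le C_1\, p^A$$
with $C_1,A$ depending only on $\G$: by Bruhat--Tits theory, the link of any vertex of $\CB_p$ is a spherical building attached to a reductive group of rank $\le r$ over a residue extension $\F_{p^f}$ whose degree $f$ is bounded by the degree of a splitting field of $\G$ over $\Q$. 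The number of vertices of such a spherical building is a sum of $\F_{p^f}$-point counts of partial flag varieties $G/P$, hence a polynomial in $p$ of bounded degree, which gives the claim. Combining, $D(p^k)\le (C_1 p^A)^k$; taking $C_1\ge 1$ and using $\log_2 p\ge 1$ for all primes $p$ we get $C_1^k\le p^{k\log_2 C_1}$, whence $D(p^k)\le m^{A'}$ for $m=p^k$ and $A':=A+\log_2 C_1$. By multiplicativity this extends to $D(m)\le m^{A'}$ for every $m\in\N$.

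Therefore
$$L(s)=\sum_{m=1}^\infty \frac{D(m)}{m^s} \le \sum_{m=1}^\infty m^{A'-s}=\zeta(s-A'),$$
which converges for $\Re s>A'+1$, and hence $B_0\le A'+1<\infty$. The one genuine obstacle is the uniform polynomial bound on the vertex degree $\Delta_p$, which is where Bruhat--Tits theory enters essentially; everything else is elementary book-keeping. The resulting bound on $B_0$ is surely far from optimal, but the lemma only asserts finiteness, so this crude estimate suffices.
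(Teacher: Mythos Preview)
Your proof is correct and follows essentially the same strategy as the paper: reduce to prime powers by multiplicativity, bound $D(p^k)$ by the $k$-th power of the vertex degree in the $1$-skeleton, obtain a polynomial bound $Cp^A$ on that degree, and absorb the constant $C^k$ via $k\le \log_2(p^k)$ to get $D(m)\le m^{A'}$. The only difference is how the polynomial degree bound is justified: the paper obtains it by embedding $\G$ into $\SL_n$ and invoking the explicit sphere formulas computed later for $\SL_n$, whereas you argue intrinsically via the spherical-building structure of links in the Bruhat--Tits building; both routes are valid and lead to the same conclusion.
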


\begin{proof}
Embedding the group into some $\SL_n$ and using the corresponding bounds to the Bruhat-Tits building of $\SL_n$ (Lemma \ref{lem4.2}) we deduce that there exists 
a number $n\in\N$ and $C>0$ such that 
$$
D(p)\le C p^n.
$$
As every vertex in the orbit of $K_p$ has $D(p)$ orbits, we deduce
$$
D(p^k)\le C^k D(p)^k
$$
and hence
$$
D(m)\le C^{k_1+\dots+k_t}m^n,
$$
if $m$ has prime decomposition $m=p_1^{k_1}\cdots p_t^{k_t}$.
But then $k_1+\dots+k_t\le \frac{\log m}{\log 2}$, so that
$$
D(m)\le m^{n+\frac{\log C}{\log 2}}.
$$
The lemma follows.
\end{proof}

\begin{theorem}
Let $\G$ satisfy the conditions (a) and (b) of Section \ref{Sec2}.
Let $r$ denote the absolute rank of $\G$ and $r_\R$ its rank over $\R$.
Let $B_0>0$ as in the last lemma and suppose that $B>B_0$.
Then the counting function $N(T)$ of the group $\G$ satisfies
$$
N(T)\ \sim\ \frac{aC}{\vol\(\G_\Q\bs\G_\A\)}(BT)^{r_\R-1} e^{BT},
$$
where $a$ is the area of an explicitly given $r_\R-1$-dimensional simplex and $C=L(B)=\sum_{m=1}^\infty\frac{D(m)}{m^B}$.
\end{theorem}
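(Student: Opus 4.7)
The plan is to apply Theorem \ref{prop1.5} to the adelic metric measure space $(X,d)$ constructed in Section 3, with $G=\G_\A$, $\Ga=\G_\Q$, and $K=K_\A$. Two hypotheses must be checked: the $K$-mixing property and the regularity of the ball volume $b(T)$. The $K$-mixing is immediate: Lemma \ref{lem3.2} derives (b') from (a) and (b), and then Lemma \ref{lem4.3} yields $K$-mixing of $G$ on $\Ga\bs G$. The remaining task is to produce a sharp asymptotic of the form $b(T)\sim c(BT)^{r_\R-1}e^{BT}$, which simultaneously forces regularity (by the example following Definition 1.2) and, via Theorem \ref{prop1.5}, gives the stated asymptotic for $N(T)$.

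To evaluate $b(T)$ I would exploit the product structure $X=X_\infty\oplus\bigoplus_{p<\infty}X_p$ together with the additive metric $d=d_\infty+\sum_{p<\infty}d_p\log p$. Grouping the non-archimedean tuples $(x_p)$ by the natural number $m=\prod_p p^{d_p(x_p,K_p)}$ (of which there are exactly $D(m)$ at $d^\fin$-distance $\log m$ from $K_\fin$) gives
$$
b(T)=\sum_{\substack{m\ge 1\\\log m\le T}}D(m)\cdot b^\infty(T-\log m),
$$
where $b^\infty(T)$ denotes the volume of the $d_\infty$-ball of radius $T$ in $X_\infty$.

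For the archimedean factor, the $KAK$ integration formula yields
$$
b^\infty(T)=c_0\int_{\{X\in\a^+:\ \norm X_B\le T\}}\prod_{\al\in\Phi^+}\sinh^{m_\al}(\al(X))\,dX.
$$
The integration domain is the scaled simplex $T\cdot\Sigma$ with $\Sigma=\{X\in\a^+:\norm X_B\le 1\}$; its free face $F=\{X\in\a^+:\norm X_B=1\}$ is an $(r_\R-1)$-dimensional simplex of volume $a$. Because $\norm X_B=\rho(X)/B$ on $\a^+$, the linear form $\rho$ attains its maximum precisely on the scaled face $T\cdot F$, while the Cartan integrand grows exponentially in $\rho(X)$ in the interior. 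A Laplace/level-set argument along the fibres of $\rho$ then produces
$$
b^\infty(T)\ \sim\ c_\infty\,(BT)^{r_\R-1}\,e^{BT}
$$
with $c_\infty$ an explicit constant built from $a$ and the Haar normalization.

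The two contributions are assembled via Lemma \ref{lemMeasure}, applied to the archimedean measure $\nu$ whose cumulative distribution is $b^\infty$ (so $\al=r_\R-1$, $\beta=B$) and the atomic finite-place measure $\mu$ with mass $D(m)$ at $\log m$. The integrability hypothesis reads
$$
C=\int_{[0,\infty)}e^{-Bx}\,d\mu(x)=\sum_{m=1}^\infty\frac{D(m)}{m^B}=L(B),
$$
which is finite precisely because $B>B_0$. Lemma \ref{lemMeasure} then delivers $b(T)\sim c_\infty L(B)(BT)^{r_\R-1}e^{BT}$; this $b$ is regular, so Theorem \ref{prop1.5} gives $N(T)\sim b(T)/\vol(\G_\Q\bs\G_\A)$, which is the claimed formula with $C=L(B)$ and $a$ the area of $F$. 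The main obstacle is the Laplace-type analysis of $b^\infty$: extracting both the polynomial factor $(BT)^{r_\R-1}$ and the constant $c_\infty$ requires careful slicing along $\rho$ and control near the walls of $\a^+$. Once $b^\infty$ is in hand, the convolution with the finite-place Dirichlet series is exactly what Lemma \ref{lemMeasure} is designed for.
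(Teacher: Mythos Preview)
Your proposal is correct and follows essentially the same route as the paper: both decompose $b(T)$ into the convolution $\sum_{m\le e^T}D(m)\,b^\infty(T-\log m)$, obtain the archimedean asymptotic $b^\infty(T)\sim a(BT)^{r_\R-1}e^{BT}$ from the Cartan integral formula by slicing along the level sets of $\rho$, and then feed the two pieces into Lemma~\ref{lemMeasure} before invoking Theorem~\ref{prop1.5}. Your write-up is in fact slightly more explicit than the paper in spelling out why the mixing hypothesis holds (via Lemmas~\ref{lem3.2} and~\ref{lem4.3}) and why the resulting $b(T)$ is regular.
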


\begin{proof}
Let $A$ be a maximal connected split torus in $G_\infty=\G(\R)$.
Setting $b^\infty(R)=\vol(B_R^\infty)$ we use the Cartan integral formula \cite{Knapp}*{p.142} to see that $b^\infty(R)$ equals
\begin{align*}
\int_{\a_R^+}\prod_{\al>0}\(\frac{e^{\al(X)}-e^{-\al(X)}}2\)^{m_\al}\, dX,
\end{align*}
where $\a_R^+$ is the set of all $X\in\a^+$ that satisfy $\rho(X)\le BR$.

The leading term of $b^\infty(R)$ in the asymptotic as $R\to\infty$ is
$$
\int_{\a_R^+}\prod_{\al>0}\(\frac{e^{\al(X)}}2\)^{m_\al}\, da=2^{-\sum_{\al>0}m_\al}\int_{\a_R^+}e^{2\rho(X)}\,da
$$

Setting $y=\rho(X)$ we find that this equals
$$
\frac1{2^{\sum_{\al>0}m_\al}}
\int_0^{BR}
F(y)e^y\,dy,
$$
where $F(y)$ equals the area of the $(r_\R-1)$-dimensional submanifold of $\a$ given as $\a_R^+\cap\left\{\rho(X)=y\right\}$.
Here we use that $\a$ carries a euclidean structure inducing the Haar measure.
This structure is not unique, but when we determine the covector $\rho$ to be of length 1, the induced measure on the orthocomplement $\{X:\rho(X)=0\}$ is uniquely determined.
In this way, $F(y)$ is well-defined.
As the equations/inequalities describing this manifold are all linear, we infer that  $F(y)=a^{d-2}$ for $\al_d=F(1)$ which is the volume of a certain $(r_\R-1)$-dimensional simplex in $\a$.

It follows that, as $R\to\infty$, 
\begin{align*}
b^\infty(R)\ \sim\ a\(BR\)^{r_\R-1}e^{BR}.
\end{align*}
Now the Theorem follows from Lemma \ref{lemMeasure} and Theorem \ref{prop1.5}.
\end{proof}

\section{The group PGL(d)}\label{PGLn}
We want to make sure that the conditions of Section \ref{Sec2} are satisfied for the group $\PGL_d$ with $d\ge 2$.
The conditions (a) is clearly satisfied. Condition (b) follows from the next lemma.

\begin{lemma}
Let $R$ be a ring and $2\le d\in\N$. Then 
$\PGL_d(R)=\GL_d(R)/R^\times$ holds for $R$ being a field or $R=\Z_p$ as well as $R=\A_S$ for any set of places $S$.
\end{lemma}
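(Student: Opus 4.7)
The plan is to use the short exact sequence of affine group schemes over $\Z$
$$
1\to\G_m\to\GL_d\to\PGL_d\to 1,
$$
in which $\G_m$ sits as the center of $\GL_d$. For any commutative ring $R$ this yields, by fppf descent, an exact sequence of pointed sets
$$
1\to R^\times\to\GL_d(R)\to\PGL_d(R)\stackrel{\delta_R}{\longrightarrow} H^1_{\mathrm{fppf}}(\mathrm{Spec}\,R,\G_m)=\mathrm{Pic}(R).
$$
So the claimed equality $\PGL_d(R)=\GL_d(R)/R^\times$ is equivalent to $\delta_R\equiv 0$, and my task in each case reduces to producing enough vanishing on the Picard side to force surjectivity.

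For $R$ a field, $\mathrm{Spec}\,R$ is a single point and $\mathrm{Pic}(R)=0$ trivially; equivalently, this is Hilbert's Theorem~90 for $\G_m$. For $R=\Z_p$, the ring is a discrete valuation ring, in particular a principal ideal domain, so again $\mathrm{Pic}(\Z_p)=0$. In both cases $\delta_R=0$ and the identification follows at once.

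For $R=\A_S$, rather than computing the Picard group directly, I would reduce to the previous two cases via the restricted-product description of adelic points. For any affine algebraic group $\mathbf H$ of finite type over $\Q$ with a chosen integral model, one has
$$
\mathbf H(\A_S)=\sideset{}{'}\prod_{v\in S}\mathbf H(\Q_v),
$$
the restriction being taken with respect to the compact-open subgroups $\mathbf H(\Z_v)$ at the finite places in $S$. Applying this to $\PGL_d$ and $\GL_d$: given $g=(g_v)\in\PGL_d(\A_S)$, the field case yields a lift $\tilde g_v\in\GL_d(\Q_v)$ at every place, and for the cofinite set of finite $v$ at which $g_v\in\PGL_d(\Z_v)$ the $\Z_p$ case lets me choose $\tilde g_v\in\GL_d(\Z_v)$. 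The resulting family then lies in the restricted product $\GL_d(\A_S)$ and projects onto $g$.

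The main obstacle is the adelic step: one has to verify that the local lifts assemble into an admissible element of the restricted product. This amounts to knowing that $\GL_d(\Z_v)\to\PGL_d(\Z_v)$ is surjective at every finite place (which is exactly the $\Z_p$ case already handled) together with the standard identification of adelic points of an affine $\Q$-group with the restricted product over its local points. Both ingredients are routine, so the field and $\Z_p$ cases really do drive the whole lemma.
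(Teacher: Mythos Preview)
Your argument is correct. For the field case and the adelic reduction via restricted products, you and the paper proceed identically: Hilbert~90 for the former, and assembling local lifts into an admissible element of the restricted product for the latter. The genuine divergence is in the $\Z_p$ step. You invoke the fppf boundary map into $\mathrm{Pic}(\Z_p)=0$, which is clean and instantly generalises to any local ring or PID. The paper instead computes by hand: it identifies the coordinate ring of $\PGL_d$ as the $\GL_1$-invariants in $\CO_{\GL_d}=R[x_{ij},y]/(\det(x)y-1)$, extends a given $\Z_p$-point of $\PGL_d$ to a $\Q_p$-valued homomorphism on $\CO_{\GL_d}$, and then uses the discrete valuation to rescale the lift (sending $\chi(y)\mapsto\chi(y)p^{dk}$ and $\chi(x_{ij})\mapsto\chi(x_{ij})p^{-k}$ for suitable $k$) so that all values land in $\Z_p$. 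Your route is shorter and conceptually tighter; the paper's is more self-contained, avoiding any appeal to descent theory or the identification $H^1_{\mathrm{fppf}}(R,\G_m)\cong\mathrm{Pic}(R)$.
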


\begin{proof}
The sequence of algebraic groups
$$
1\to\GL_1\to\GL_d\to\PGL_d\to 1
$$
is exact.
By Hilbert's Theorem 90, for each field $k$ the Galois cohomology $H^1(k,\GL_1)$ is trivial and so the exact sequence of Galois cohomology shows that the sequence
$$
1\to\GL_1(k)\to\GL_d(k)\to\PGL_d(k)\to 1
$$
is exact, which implies the claim for fields.
To verify the claim for $R=\Z_p$, we have to analyze the coordinate ring of $\PGL_d$.
First, the coordinate ring of $\GL_d$ over $R$ is
$$
\CO_{\GL_d}= R[x_{ij},y]/\det(x)y-1.
$$
The coordinate ring of $PGL_d=\GL_d/\GL_1$ is the ring of $\GL_1$-invariants, where the action of $\GL_1$ is given by
$$
\la.f(x_{ij},y)=f(\la x_{ij},\la^{-d} y).
$$
The ring of invariants is generated by all monomials of the form $x_{i_1j_1}\cdots x_{i_dj_d}y$ for $1\le i_k,j_k\le d$.
Let now $\chi\in\PGL_d(\Z_p)$.
Then $\chi$ is a homomorphism from $\CO_{\PGL_d}$ to $\Z_p$.
Every such can be extended to $\CO_{\GL_d}\to\Q_p$ and we have to show that there exists an extension mapping to $\Z_p$.
Pick any extension and denote it by the same letter $\chi$.
For the valuation $v$ on $\Q_p$ we have
$$
0\ \le\ v(\chi(x_{ij}^dy))=dv(\chi(x_{ij}))+v(\chi(y)).
$$
We are free to change $\chi(y)$ to $\chi(y)p^{dk}$ for any $k\in\Z$ if at the same time we change $\chi(x_{ij})$ to $\chi(x_{ij})p^{-k}$.
Thus we can assume $v(\chi(y))\in\{ 0,1,\dots,d-1\}$.
Then we conclude $v(\chi(x_{ij}))\ge 0$ for all $i,j$ and so $\chi$ indeed maps into $\Z_p$ as claimed.
This shows the result for $\Z_p$.
The ring $\A_S$ is the restricted product of the fields $\Q_p$, $p\in S$ with respect to the compact subrings $\Z_p$.
So the result for $\Q_p$ and $\Z_p$ implies the result for $\A_S$.
\end{proof}

We want to determine the asymptotic growth of $b(T)$ in the case $\G=\PGL_d$.
By definition we have
$$
b(T)=\sum_{m\le e^T}\left[\prod_{p^k||m}\vol(D_k^p)\right]\vol(B_{T-\log m}^\infty).
$$
Here the sum runs over all $m\in\N$ with $m\le e^T$, the product extends over all prime numbers $p$, where $p^k$ is the exact power of $p$ dividing $m$.
Further $\vol(D_k^p)$ is the volume (=cardinality) of the set
$$
D_k^p=\Big\{ x\in X_p: d_p(x,K_p)=k\Big\}.
$$
Note that $D_k^p=B_k^p\sm B_{k-1}^p$, where $B_k^p$ is the closed ball of radius $k$ in $X_p$.
As $\vol(D_0^p)=1$, the product above is finite.

For $m\in\N$ we write
$$
D(m)=D_d(m)=\prod_{p^k||m}\vol(D_k^p)=\prod_{p^k||m} D(p^k).
$$

\begin{lemma}\label{lem4.2}
For every prime number $p$ and every $k\in\N$ we have
$$
D(p^k)=D(p)c(p)^{k-1}
$$
where  
$$
c(p)=c_d(p)=(d-1)p^{d-1}+p^{d-2}+\dots+p=(d-1)p^{d-1}+\frac{p^{d-1}-1}{p-1}-1.
$$
One has
$$
D(p)=(d-1)(p^{d-1}+\dots+1)=(d-1)\frac{p^d-1}{p-1}.
$$
Let $s_p=s_p(d)=\frac{\log(c(p))}{\log p}$. The series
$$
L(s)=\sum_{m=1}^\infty \frac{D(m)}{m^s}
$$
converges absolutely for $\Re(s)>s_2(d)$ if $d\ge 3$ and for $\Re(s)>2$ if $d=2$.
The function 
$L(s)$ extends to a meromorphic function in $\{\Re(s)>d\}$ with at most simple poles at 
$$
s_p+2\pi i\log(p)k,\quad k\in\Z
$$
where $p$ is a prime number. As the prime $p$ tends to infinity, the sequence  of real parts of the poles, $s_p=\frac{\log(c(p))}{\log p}$, converges monotonically from above to $d-1$, which means that only finitely many of them are $>d$.
The first values for $s_2$ are
$$
\begin{array}{ccc}
n&s_2&s_3\\ \ \\
2&1&1
\\
3&3.3219280949
&2.7712437492
\\
4&4.9068905956&4.1257498573
\\
5&6.2854022189&5.3653166773
\\
6&7.5698556083&6.5507064185
\end{array}
$$
In the special case $d=2$ we have
$$
L(s)=\frac{\zeta(s)\zeta(s-1)}{\zeta(2s)},
$$
where $\zeta$ denotes the Riemann zeta function.
\end{lemma}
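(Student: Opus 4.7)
The lemma has four parts: (1) the closed forms of $D(p),c(p)$ together with the recursion $D(p^k)=D(p)c(p)^{k-1}$; (2) the abscissa of absolute convergence of $L(s)$; (3) meromorphic continuation to $\{\Re(s)>d\}$ with the stated poles; and (4) the $d=2$ identity. I would treat them in that order.

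For (1), I would identify the vertices of $\CB_p$ with homothety classes $[L]$ of $\Z_p$-lattices in $\Q_p^d$, so that $K_p$ corresponds to $[\Z_p^d]$. Parametrizing the $\G_p$-orbit by the Cartan decomposition $\G_p=\bigsqcup_\la K_pg_\la K_p$ indexed by dominant cocharacters $\la=(a_1\ge\cdots\ge a_d)$, the vertices in a given double coset are counted by the index $|K_p/(K_p\cap g_\la K_pg_\la^{-1})|$, which via reduction to $L/pL\cong\F_p^d$ amounts to counting subspace configurations in $\F_p^d$; summing over the types at combinatorial distance $1$ yields the formula for $D(p)$. The recursion $D(p^k)=D(p)c(p)^{k-1}$ then follows from a forward-branching argument: the stabilizer of $K_p$ acts transitively on vertices of each fine type at distance $k$, so the number of distance-$(k+1)$ neighbors of any such vertex is a fixed constant $c(p)$, which I would evaluate by an explicit computation in a standard apartment.

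For (2) and (3), multiplicativity of $D$ yields the Euler product
$$
L(s)=\prod_p\frac{1+(D(p)-c(p))p^{-s}}{1-c(p)p^{-s}},
$$
with simple poles in the $p$-th factor exactly at $s=s_p+2\pi ik/\log p$, $k\in\Z$. Since $c(p)\sim(d-1)p^{d-1}$, the sequence $s_p=\log c(p)/\log p$ decreases to $d-1$ (monotonicity by direct estimation of $c(p)^{1/\log p}$), so only finitely many $p$ satisfy $s_p>d$; for $d\ge 3$, pointwise convergence of each factor then dictates abscissa $\max_p s_p=s_2(d)$. To continue past $s_2(d)$, I would isolate the finite product over $\{p:s_p>d\}$, which supplies precisely the poles claimed in $\{\Re(s)>d\}$, and show that the complementary product converges absolutely there via the expansion $\log E_p(s)=D(p)p^{-s}+O\bigl(p^{2(d-1)-2\Re(s)}\bigr)$, whose dominant term is summable when $\Re(s)>d$ thanks to $D(p)\ll p^{d-1}$. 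For $d=2$ the pointwise bound $s_p=1$ is not sharp; instead $\Re(s)>2$ comes from $\sum_p D(p)/p^s\asymp\sum_p p^{1-s}$, and rewriting $E_p(s)=(1-p^{-2s})/\bigl((1-p^{-s})(1-p^{1-s})\bigr)$ converts the Euler product directly into $\zeta(s)\zeta(s-1)/\zeta(2s)$.

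\textbf{Main obstacle.} The most delicate step is the exact recursion $D(p^k)=D(p)c(p)^{k-1}$: since the 1-skeleton of $\CB_p$ has genuine cycles when $d\ge 3$, the tree-like multiplicative growth is not geometrically obvious, and making the forward-branching argument rigorous will require either a careful lattice-theoretic enumeration (showing that each distance-$(k+1)$ vertex is reached from a prescribed number of distance-$k$ vertices, independent of the chosen vertex) or an equivalent Bruhat-cell calculation in the affine Weyl group. The subsequent analytic steps are then essentially bookkeeping applied to the Euler product.
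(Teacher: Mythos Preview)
The analytic portions of your plan---Euler product, abscissa of convergence, meromorphic continuation to $\{\Re(s)>d\}$, and the $d=2$ identity---run exactly as in the paper, so there is nothing to add there. For part~(1) the paper counts chambers through a simplex via indices of parahoric subgroups, while you propose the lattice/Cartan route; either starting point is fine and leads to the same value of $D(p)$.

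Your instinct about the recursion is correct, and more so than you may realise. The paper computes, for a fixed $w$ at distance~$1$ from $v$, that $w$ has exactly $c(p)$ neighbours at distance~$2$, and then asserts that ``at this point it is clear how to obtain the general result by induction.'' That step tacitly assumes each distance-$(k{+}1)$ vertex is reached from a \emph{unique} distance-$k$ neighbour, i.e.\ that the $1$-skeleton looks like a tree when viewed from $v$. This holds for $d=2$ but fails for $d\ge 3$, precisely because of the cycles you flag as the main obstacle.

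In fact the stated recursion is false for $d\ge 3$. For $\G=\PGL_3$ the combinatorial distance from $[\Z_p^3]$ to a class with elementary divisors $(a_1\ge a_2\ge 0)$ is $a_1$, so the vertices at distance~$2$ have types $(2,0,0)$, $(2,1,0)$, $(2,2,0)$. The double-coset sizes $|K_pt_\la K_p/K_p|$ give
\[
D(p^2)=p^2\,\frac{p^3-1}{p-1}+p(p+1)\,\frac{p^3-1}{p-1}+p^2\,\frac{p^3-1}{p-1}
=p(3p+1)\,\frac{p^3-1}{p-1},
\]
whereas the lemma predicts $D(p)c(p)=2p(2p+1)\dfrac{p^3-1}{p-1}$; already for $p=2$ these are $98$ and $140$. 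Thus neither your forward-branching argument nor a Bruhat-cell computation can establish the claimed identity---carrying out the careful lattice enumeration you propose would uncover this discrepancy rather than confirm the formula. The paper's inductive step, and hence the combinatorial part of the lemma for $d\ge 3$, has a genuine gap.
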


\begin{proof}
We show the equivalent formula for the ball volume
$$
\vol(B_k^p)=1+(d-1)\frac{p^d-1}{p-1}\left[\frac{\((d-1)\frac{p^d-1}{p-1}-(d-2)\frac{p^{d-1}-1}{p-1}-1\)^k-1}{(d-1)\frac{p^d-1}{p-1}-(d-2)\frac{p^{d-1}-1}{p-1}-2}\right].
$$
Note that the building $\CB_p$ actually also is the building for the group $\SL_d$.
As it is slightly more convenient here, we will use the latter group for the computations.
We begin by establishing a
formula for the number of chambers of 
$\CB_p$ incident with a fixed $m$-simplex 
$\sigma\subset \CB_p$ where $m \le d-2$. This can 
be found by counting the number of 
minimal parahoric subgroups contained 
in the pointwise stabiliser of $\sigma$ in $G_p=\SL_d(\Q_p)$. 
The pointwise stabiliser of $\sigma$ in 
$G_p$ 
is itself a parahoric subgroup $G_{p,\sigma}$, and 
the number of minimal parahoric 
subgroups that it contains is equal 
to the number of conjugates of a 
fixed minimal parahoric subgroup $G_{p,C}\subset G_{p,\sigma}$, where $C$ is some chamber incident to $\sigma$ and $G_{p,C}$ is its pointwise stabiliser in $G_p$, since the chambers 
incident with $\sigma$ comprise a single $G_{p,\sigma}$-orbit. 
So the desired number may be computed by finding the index in $G_{p,\sigma}$ of the normaliser in $G_{p,\sigma}$ of $G_{p,C}$.
But parahoric subgroups are self-normalising in $G_p$, and so for that reason the number in question is equal to the index of $G_{p,C}$ in $G_{p,\sigma}$. 
The number does not depend on our choice of $C$ and $\sigma$, so it is sufficient to calculate it in the special case where $G_{p,C}$ is the subgroup of $\SL_n(\Z_p)$ such that the entries on or above the main diagonal lie in $\Z_p$ and the entries below the main diagonal lie in the maximal ideal $p\Z_p$, and $G_{p,\sigma}$ is
some strictly larger parahoric subgroup. 
In this case the index of $G_{p,C}$ in 
$G_{p,\sigma}$ may be found by considering the projections to $\SL_d(\F_p)$, and we can calculate it by counting flags of a fixed type in $\P^{d-1}(k)$. 

Thus we obtain the result that the number of chambers of $\CB_p$ incident to a fixed 
$m$-simplex where $m$ is an integer such that $m \le d - 2$, is given by 
$\prod_{j=2}^{d-m}\frac{q^j-1}{q-1}$.
Now let us apply this result to counting the number $b^p_k$ of vertices of distance at most $k$ from a fixed vertex $v$ in the 1-skeleton of $\CB_p$. We clearly have $b_0^p = 1$. 
To calculate $b_1^p$ we must count the number
of edges incident to $v$. 
There are $\prod_{j=2}^d \frac{p^j-1}{p-1}$
chambers incident to $v$ and each one is incident with $d-1$ edges which are incident with $v$. 
But each edge gets counted $\prod_{j=2}^{d-1}\frac{p^j-1}{p-1}$ times, so that we obtain $b_1^p=1+(d-1)\frac{p^d-1}{p-1}$.
To generalise this to the case $k=2$ we must
count how many vertices of distance $2$ from $v$ are joined by an edge
to some fixed vertex $w$ of distance $1$ from $v$. 
There are $(d-1)\frac{p^d-1}{p-1}$ 
vertices joined by an edge to $w$ altogether, 
we must exclude $v$ and vertices at distance $1$ from $v$. 
The number of vertices at distance $1$ from $v$ which are joined by an edge to $w$ is equal to the number of edges incident to $w$ which are incident to chambers incident to $v$ but
not to $v$ itself. 
This number is $(d-2)\frac{p^{d-1}-1}{p-1}$.
We have now justified
the formula in the case $k=2$ and at this point it is clear how to obtain the general result by induction.

The statement on the convergence of the Dirichlet series is obtained as follows.
First,  using  the estimate $k_1+\dots+k_s\le \frac{\log m}{\log2}$, if $m=p_1^{k_1}\cdots p_s^{k_s}$, where the $p_j$ are the different primes dividing $m$, we obtain 
$D(m)\le m^{d-1+\frac{\log d}{\log 2}}$, so that the Dirichlet series converges absolutely for $\Re(s)>>0$.
As the coefficient $D(m)$ is weakly multiplicative, in the domain of absolute convergence the series admits an Euler product of the form
$$
\prod_{p}\(1+\sum_{k=1}^\infty D(p^k)p^{-ks}\).
$$
Write
$$
c(p)=(d-1)p^{d-1}+\frac{p^d-1}{p-1}-1.
$$
For $k\in\N$ we have $D(p^k)=D(p)c(p)^{k-1}$, so that the Euler product becomes
\begin{align*}
\prod_{p}\(1+\sum_{k=1}^\infty D(p)c(p)^{k-1}p^{-ks}\)
&=\prod_{p}\(1+\frac{D(p)}{c(p)}\sum_{k=1}^\infty (c(p)p^{-s})^k\)\\
&=\prod_{p}\(1+\frac{D(p)}{c(p)}\frac{c(p)p^{-s}}{1-c(p)p^{-s}}\)\\
&=\prod_{p}\frac{1-\left[c(p)-D(p)\right]p^{-s}}{1-c(p)p^{-s}}.
\end{align*}
In the case $d=2$ we have $c(p)=p$ and $D(p)=p+1$, so that in this case,
$$
L(s)=\prod_p\frac{1+p^{-s}}{1-p^{-(s-1)}}=
\frac{\zeta(s)\zeta(s-1)}{\zeta(2s)}.
$$
In general, as $c(p),D(p)\le 2(d-1)p^{d-1}$ we infer that the Euler products
$$
\prod_{p}(1-c(p)p^{-s})
$$
and
$$
\prod_{p}\(1-\left[c(p)-D(p)\right]p^{-s}\)
$$
both converge for $\Re(s)>d$ thus extending the holomorphic function $\sum_m\frac{D(m)}{m^s}$ to the range $\Re(s)>d$ minus the set of zeros of the Euler factors $(1-c(p)p^{-s})$.
These zeros are exactly at the points $\frac{\log(c(p))}{\log p}+2\pi i\log(p)k$ and as the function $x\mapsto \frac{\log(c(x))}{\log(x)}$ is monotonically decreasing for $x>1$ we get the extension of $L(s)$.
\end{proof}

In the special case, when the rank of the group $\G$ is one, we can say more.
In this case we also get a result for small values of $B$, which we explain in the examples $\G=\PGL_2$ and $\SL_2$.
 
\begin{theorem}
For the group $\G=\PGL_2$, as $T\to\infty$,
$$
N(T)\ \sim\ \frac{30}{\pi^2}\int_0^\infty e^{-2t}b^\infty(t)\,dt\ \frac1{\vol(\G_\Q\bs\G_\A)}\  e^{2T}.
$$
Equivalently we can write
$$
\pi(x)\ \sim\ \frac{30}{\pi^2}\int_0^\infty e^{-2t}b^\infty(t)\,dt\ \frac1{\vol(\G_\Q\bs\G_\A)}\ x^2.
$$
\end{theorem}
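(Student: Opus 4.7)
The plan is to apply Theorem \ref{prop1.5} to the lattice $\Ga=\G_\Q\subset G=\G_\A$ with $K=K_\A$. The hypothesis of $K$-mixing is already at hand for $\PGL_2$ by Section \ref{PGLn} together with Lemmas \ref{lem3.2} and \ref{lem4.3}; regularity of $b(T)$ will drop out for free once we establish a pure-exponential asymptotic $b(T)\sim C' e^{2T}$. The conversion from $N(T)$ to $\pi(x)$ is then immediate: the construction of the global height gives $\log h(\ga)=\sum_p d_p(\ga K_p,K_p)\log p+d_\infty(\ga K_\infty,K_\infty)=d(\ga K,K)$, so $\pi(x)=N(\log x)$ and substituting $T=\log x$ converts $e^{2T}$ into $x^2$.

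The core of the argument is the asymptotic of $b(T)$, which I would obtain in two steps. First, by Lemma \ref{lem4.2} specialised to $d=2$, the Dirichlet series
$$L(s)=\sum_{m\ge 1}\frac{D(m)}{m^s}=\frac{\zeta(s)\zeta(s-1)}{\zeta(2s)}$$
has a unique simple pole in $\{\Re s\ge 1\}$ at $s=2$, with residue $\zeta(2)/\zeta(4)=15/\pi^2$. Applying Wiener-Ikehara to the shifted series $L(s+1)$ and following it with Abel summation yields an honest exponential asymptotic
$$b^\fin(T)=\sum_{m\le e^T}D(m)\ \sim\ c_0\,e^{2T}$$
for an explicit constant $c_0$ expressed in terms of the above residue.

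Second, the full adelic ball volume decomposes as $b(T)=\int_{D(T)}d(\mu\otimes\nu)$, where $\nu$ is the atomic measure $\sum_m D(m)\delta_{\log m}$ on $[0,\infty)$ (so $\nu([0,T])=b^\fin(T)$) and $\mu$ is the measure with $\mu([0,T])=b^\infty(T)$. The hypothesis $0<B<2$, tacit in the finiteness of $\int_0^\infty e^{-2t}b^\infty(t)\,dt$, guarantees $C:=\int_0^\infty e^{-2t}\,d\mu(t)<\infty$, and integration by parts (using $b^\infty(0)=0$) rewrites $C$ as a multiple of $\int_0^\infty e^{-2t}b^\infty(t)\,dt$. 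Lemma \ref{lemMeasure}, applied with $\al=0$ and $\be=2$, then yields $b(T)\sim c_0 C\,e^{2T}$, a pure exponential. Hence $b$ is regular, all hypotheses of Theorem \ref{prop1.5} are satisfied, and $N(T)\sim b(T)/\vol(\G_\Q\bs\G_\A)$; the height-to-distance conversion finishes the argument.

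The main obstacle is the Tauberian step: Lemma \ref{lemMeasure} demands a genuine asymptotic for $b^\fin$, not merely the order-of-magnitude bounds of Corollary \ref{Cor2.6}. Here one leans on the explicit factorisation of $L$ through the Riemann zeta function and the classical zero-free region $\zeta\ne 0$ on $\Re s\ge 1$, which together supply the continuous extension of $L(s)-15/(\pi^2(s-2))$ across the line $\Re s=2$ required for Wiener-Ikehara. A secondary, purely computational task is to track the constants carefully through the integration-by-parts step so that the coefficient $30/\pi^2$ stated in the theorem is recovered exactly.
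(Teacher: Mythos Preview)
Your argument is correct and follows the same route as the paper: Wiener--Ikehara for the finite-place ball volume, Lemma~\ref{lemMeasure} to merge it with $b^\infty$, and then Theorem~\ref{prop1.5}; your unweighted measure decomposition $\nu=\sum_m D(m)\,\delta_{\log m}$, $\mu([0,T])=b^\infty(T)$ together with the explicit integration-by-parts identification $\int_0^\infty e^{-2t}\,d\mu(t)=2\int_0^\infty e^{-2t}b^\infty(t)\,dt$ is in fact a little tidier than the paper's presentation. One small slip: $L(s)=\zeta(s)\zeta(s-1)/\zeta(2s)$ also has a simple pole at $s=1$, so ``unique pole in $\{\Re s\ge 1\}$'' should read $\{\Re s\ge 2\}$, but this is harmless since Wiener--Ikehara only needs the continuous extension across the line $\Re s=2$.
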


\begin{corollary}
For the group $\G=\SL_2$ we get
$$
\pi(x)\ \sim\ \frac{15}{\pi^2}\int_0^\infty e^{-\frac32t}b^\infty(t)\,dt\ \frac1{\vol(\G_\Q\bs\G_\A)}\ x^{3/2}.
$$
\end{corollary}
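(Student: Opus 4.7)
The argument runs in parallel with the proof of the preceding theorem for $\G = \PGL_2$, substituting the data appropriate to $\G = \SL_2$. I sketch the main modifications in four steps.

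First, I verify that $\SL_2$ satisfies the hypotheses (a), (b) of Section~\ref{Sec2}: almost $\Q$-simplicity is immediate, $\SL_2$ is its own simply connected cover, and $\SL_2(\R)$ is non-compact. By Lemma~\ref{lem4.3} the adelic group then acts $K$-mixingly on $\Ga \bs G$, supplying the hypothesis of Theorem~\ref{prop1.5}.

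Second, I determine the finite ball volume $b^\fin(T)$ for $\SL_2$. The key difference from the $\PGL_2$ case is that at each finite prime $p$, the group $\SL_2(\Q_p)$ acts with two orbits on the vertices of the $(p+1)$-regular tree $\CB_p$: the base vertex fixed by $K_p = \SL_2(\Z_p)$ lies in one of them, so $\SL_2(\Q_p)/K_p$ embeds onto the vertices of a single parity, which all sit at even combinatorial distance from $K_p$. For each $k \ge 1$ there are $(p+1)p^{2k-1}$ such same-parity vertices at distance $2k$. Assembling the local Euler factors gives a Dirichlet series that, by the method of Lemma~\ref{lem4.2}, admits a meromorphic continuation past its rightmost singularity---a simple pole at $s = 3/2$. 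Wiener--Ikehara then yields $b^\fin(T) \sim c\,e^{(3/2)T}$ for an explicit constant $c$.

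Third, I combine $b^\fin$ with $b^\infty$ via Lemma~\ref{lemMeasure}: set $\nu([0,T]) = b^\fin(T)$ (so $\alpha = 0$, $\beta = 3/2$) and take $\mu$ to be the Lebesgue--Stieltjes measure of $b^\infty$. Since $B < d = 3/2$ the factor $e^{-(3/2)t}b^\infty(t)$ tends to $0$, and integration by parts gives $\int_{[0,\infty)} e^{-(3/2)t}\,d\mu(t) = (3/2)\int_0^\infty e^{-(3/2)t}b^\infty(t)\,dt$; the lemma then produces
\[
b(T) \sim cd\int_0^\infty e^{-(3/2)t}b^\infty(t)\,dt \cdot e^{(3/2)T}.
\]
Since $b(T)$ is asymptotic to a positive constant times $e^{(3/2)T}$, it is regular, so Theorem~\ref{prop1.5} gives $N(T) \sim b(T)/\vol(\SL_2(\Q)\bs \SL_2(\A))$. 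Substituting $T = \log x$ converts $e^{(3/2)T}$ into $x^{3/2}$, whence the stated asymptotic for $\pi(x) = N(\log x)$ follows.

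The main obstacle is the second step: pinning down $c$ (and hence the final constant $cd = 15/\pi^2$) requires both the meromorphic continuation of the $\SL_2$-Dirichlet series---parallel to but distinct from the $\PGL_2$ one in Lemma~\ref{lem4.2}---and careful tracking of the factor of $2$ arising from the ``half-distance'' normalisation, since all $p$-adic $\SL_2$-distances are even integers.
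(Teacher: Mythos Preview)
Your proposal is correct and follows essentially the same route as the paper's proof: both arguments observe that $\SL_2(\Q_p)$ has two orbits on the vertices of the tree, build the corresponding Dirichlet series, locate its rightmost pole at $s=3/2$, and then feed the resulting finite-place asymptotic into Lemma~\ref{lemMeasure} and Theorem~\ref{prop1.5} exactly as in the $\PGL_2$ case. The paper is simply terser: it records the closed form
\[
L(s)=\frac{\zeta(2(s-1))\,\zeta(2s-1)}{\zeta(4s-2)}
\]
(which is precisely what your vertex count $(p+1)p^{2k-1}$ at distance $2k$ yields after summing the Euler factor) and reads off the simple pole at $s=3/2$, whereas you describe this step abstractly and flag it as the main obstacle. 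One small point worth noting: your assignment of $\nu$ to the finite part and $\mu$ to the infinite part in Lemma~\ref{lemMeasure} is in fact the cleaner choice, since for $0<B<3/2$ it is the finite-place growth $e^{(3/2)T}$ that dominates; the paper's labelling of $\mu$ and $\nu$ in the $\PGL_2$ proof is somewhat loose on this point.
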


\begin{proof}
Let $\G=\PGL_2$.
Then we have
$$
L(s)=\frac{\zeta(s)\zeta(s-1)}{\zeta(2s)}.
$$
This function is holomorphic in $\Re(s)>2$ with a simple pole at $s=2$ of residue $\zeta(2)/\zeta(4)=\frac{15}{\pi^2}$.
A straightforward application of the Wiener-Ikehara Theorem \cite{Chand} yields for any $0\le B<2$ that
$$
\sum_{m\le e^T}\frac{D(m)}{m^B}\ \sim\ \frac{15}{\pi^2} e^{(2-B)T},
$$
as $T\to\infty$.
On the other hand we have $b^\infty(T)\sim e^{BT}$.
Suppose that $0<B<2$. Then we have two Radon measures $\mu$ and $\nu$ on $[0\infty)$ with $\mu([0,T])=\sum_{m\le e^T}\frac{D(M)}{m^B}\sim \frac{15}{\pi^2} e^{(2-B)T}$ and $\nu([0,T])=b^\infty(T)\sim e^{BT}$.

Lemma \ref{lemMeasure} implies that
$$
\lim_{T\to\infty}\frac{b(T)}{e^{2T}}=
\frac{15}{\pi^2}\int_0^\infty e^{-2t}d^\infty(t)\,dt,
$$
which by Theorem \ref{prop1.5} implies the claim of the theorem.
In order to prove the corollary, one
notes that $\SL_2(\Q_p)$ does not act transitively on the vertices of the Bruhat-Tits tree, but has two orbits.
Hence a calculation similar to the $\PGL_2$-case shows that
$$
L(s)=\frac{\zeta(2(s-1))\zeta(2s-1)}{\zeta(4s-2)},
$$
which has a simple pole at $s=\frac 32$ of residue $\frac12$ which gives the claim.
\end{proof}

\begin{corollary}
Let $N(T)$ be the counting function of the group $\G=\PGL_d$ with $d\ge 3$.
If $0<B<d$, we have for any $\delta>0$, as $T\to\infty$,
$$
e^{Ts_2}\ \ll\ N(T)\ \ll\  e^{T(s_2+1+\delta)},
$$
where $s_2=\frac{\log(d\,2^{d-1}-2)}{\log 2}$, see Lemma \ref{lem4.2}.
\end{corollary}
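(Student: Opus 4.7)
The plan is to estimate the ball volume
$$
b(T)=\sum_{m\le e^T}D(m)\,b^\infty(T-\log m)
$$
from both sides and transfer the estimates to $N(T)$ via the machinery of Section~1. Note first that $\PGL_d$ satisfies (a) and (b) by the discussion at the start of Section~\ref{PGLn}, hence (b') by Lemma~\ref{lem3.2}, so by Lemma~\ref{lem4.3} the adele group $G=\G_\A$ acts $K$-mixingly on $\Ga\bs G$. Since regularity of $b(T)$ cannot be assumed when $B$ is small, I will invoke Corollary~\ref{Cor1.5} in place of Theorem~\ref{prop1.5}, obtaining $b(T-\eps)\ll N(T)\ll b(T+\eps)$ for every $\eps>0$. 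It therefore suffices to prove the corresponding two-sided bound for $b(T)$.

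For the lower bound, I will restrict the defining sum to $m=2^k$ with $0\le k\le \lfloor T/\log 2\rfloor-1$. Such $k$ satisfy $T-k\log 2\ge \log 2$, so $b^\infty(T-k\log 2)\ge b^\infty(\log 2)>0$. By Lemma~\ref{lem4.2}, $D(2^k)=D(2)\,c(2)^{k-1}$ with $c(2)=d\cdot 2^{d-1}-2$, so the resulting geometric progression is dominated by its top term $c(2)^{\lfloor T/\log 2\rfloor-1}$, which is comparable to $c(2)^{T/\log 2}=e^{Ts_2}$. This yields $b(T)\gg e^{Ts_2}$.

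For the upper bound, the decisive input is that by Lemma~\ref{lem4.2} the Dirichlet series $L(s)=\sum_m D(m)/m^s$ has abscissa of convergence $s_2$; since its coefficients are non-negative, Landau's theorem gives $\sum_{m\le e^u}D(m)\ll e^{(s_2+\delta/2)u}$ for every $\delta>0$. Because $c(2)=d\cdot 2^{d-1}-2>2^d$ for $d\ge 3$, we have $s_2>d>B$, so the Laplace transform $C=\int_0^\infty e^{-(s_2+\delta/2)t}\,db^\infty(t)$ is finite. Corollary~\ref{Cor2.6}, applied with $\mu=db^\infty$ and with $\nu$ defined by $\nu([0,u])=\sum_{m\le e^u}D(m)$, then yields
$$
b(T)\ \ll\ e^{(s_2+\delta/2)T}\ \ll\ e^{T(s_2+1+\delta)}.
$$

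The main obstacle is the infinite family of poles of $L(s)$ on the line $\Re s=s_2$, at $s_2+2\pi i k/\log 2$ for $k\in\Z$: it blocks any Wiener--Ikehara style argument from producing a sharp asymptotic for $\sum_{m\le e^T}D(m)$, and hence for $b(T)$. This is why the result is phrased with $\ll$/$\gg$ rather than $\sim$; in fact the argument above shows the stated upper bound $e^{T(s_2+1+\delta)}$ can be tightened to $e^{T(s_2+\delta)}$.
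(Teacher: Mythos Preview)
Your proof is correct and follows essentially the paper's approach: restrict to powers of $2$ for the lower bound, use the convergence of $L(s)$ for $\Re(s)>s_2$ from Lemma~\ref{lem4.2} for the upper bound, pass to $b(T)$ via Corollary~\ref{Cor2.6}, and transfer to $N(T)$ by Corollary~\ref{Cor1.5}. Your closing observation is also right: the argument in fact yields the sharper upper bound $N(T)\ll e^{T(s_2+\delta)}$, so the paper's stated exponent $s_2+1+\delta$ is not optimal.
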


\begin{proof}
We have 
$$
L(s)=\prod_{p}\(1+\sum_{k=1}^\infty D(p)c(p)^{k-1}p^{-ks}\).
$$
Write
$$
\prod_{p\ge 3}\(1+\sum_{k=1}^\infty D(p)c(p)^{k-1}p^{-ks}\)
=\sum_{m=1}^\infty\frac{D^{(2)}(m)}{m^s}.
$$
Then
\begin{align*}
\sum_{m\le e^T}\frac{D(m)}{m^B}
&=\sum_{2^k\le e^T}\frac{D(2)c(2)^{k-1}}{2^{Bk}}\sum_{\substack{m'\le e^T/2^k\\ 2\,\nmid\, m'}}\frac{D^{(2)}(m')}{(m')^B}\\
&\ge \sum_{2^k\le e^T}\frac{D(2)c(2)^{k-1}}{2^{Bk}}\\
&=\frac{D(2)}{c(2)}\sum_{k=0}^{\left[\frac{T}{\log 2}\right]}\(\frac{c(2)}{2^B}\)^k\\
&=\frac{D(2)}{c(2)}
\frac{\(\frac{c(2)}{2^B}\)^{\left[\frac{T}{\log 2}\right]}-1}{\frac{c(2)}{2^B}-1}\\
&\ge\frac{D(2)}{c(2)}
\frac{\(\frac{c(2)}{2^B}\)^{\frac{T}{\log 2}-1}-1}{\frac{c(2)}{2^B}-1}\\
&=\frac{D(2)}{c(2)}
\frac{\frac{2^B}{c(2)}e^{T(s_2-B)}-1}{\frac{c(2)}{2^B}-1},
\end{align*}
so
$$
\sum_{m\le e^T}\frac{D(m)}{m^B}
\gg e^{T(s_2-B)}.
$$
On the other hand, by Lemma \ref{lem4.2} we have that
$$
\sum_{m\le e^T}\frac{D(m)}{m^B}
\ll e^{T(s_2+1+\delta-B)}
$$
for any $\eps>0$.
With the methods of the proof of the last theorem we conclude
$$
e^{Ts_2}\ \ll\ b(T)\ \ll\  e^{T(s_2+1+\delta)},
$$
as $T\to\infty$.
Now the present corollary follows from Corollary \ref{Cor1.5} together with Corollary \ref{Cor2.6}.
\end{proof}

\begin{bibdiv} \begin{biblist}

\bib{Arthur}{article}{
   author={Arthur, James},
   title={Eisenstein series and the trace formula},
   conference={
      title={Automorphic forms, representations and $L$-functions},
      address={Proc. Sympos. Pure Math., Oregon State Univ., Corvallis,
      Ore.},
      date={1977},
   },
   book={
      series={Proc. Sympos. Pure Math., XXXIII},
      publisher={Amer. Math. Soc., Providence, R.I.},
   },
   date={1979},
   pages={253--274},
}

\bib{BaMan}{article}{
   author={Batyrev, V. V.},
   author={Manin, Yu. I.},
   title={Sur le nombre des points rationnels de hauteur born\'e des
   vari\'et\'es alg\'ebriques},
   language={French},
   journal={Math. Ann.},
   volume={286},
   date={1990},
   number={1-3},
   pages={27--43},
   issn={0025-5831},
   doi={10.1007/BF01453564},
}

\bib{Borel}{article}{
   author={Borel, Armand},
   title={Some finiteness properties of adele groups over number fields},
   journal={Inst. Hautes \'Etudes Sci. Publ. Math.},
   number={16},
   date={1963},
   pages={5--30},
   issn={0073-8301},
}

\bib{Chand}{book}{
   author={Chandrasekharan, K.},
   title={Introduction to analytic number theory},
   series={Die Grundlehren der mathematischen Wissenschaften, Band 148},
   publisher={Springer-Verlag New York Inc., New York},
   date={1968},
   pages={viii+140},
}

\bib{AF}{book}{
   author={Deitmar, Anton},
   title={Automorphic forms},
   series={Universitext},
   note={Translated from the 2010 German original},
   publisher={Springer, London},
   date={2013},
   pages={x+252},
   isbn={978-1-4471-4434-2},
   isbn={978-1-4471-4435-9},
}

\bib{HA2}{book}{
   author={Deitmar, Anton},
   author={Echterhoff, Siegfried},
   title={Principles of harmonic analysis},
   series={Universitext},
   edition={2},
   publisher={Springer, Cham},
   date={2014},
   pages={xiv+332},
   isbn={978-3-319-05791-0},
   isbn={978-3-319-05792-7},
   doi={10.1007/978-3-319-05792-7},
}

\bib{FrManTsch}{article}{
   author={Franke, Jens},
   author={Manin, Yuri I.},
   author={Tschinkel, Yuri},
   title={Rational points of bounded height on Fano varieties},
   journal={Invent. Math.},
   volume={95},
   date={1989},
   number={2},
   pages={421--435},
   issn={0020-9910},
   doi={10.1007/BF01393904},
}

\bib{Gorodnik}{article}{
   author={Gorodnik, Alex},
   author={Maucourant, Fran{\c{c}}ois},
   author={Oh, Hee},
   title={Manin's and Peyre's conjectures on rational points and adelic
   mixing},
   language={English, with English and French summaries},
   journal={Ann. Sci. \'Ec. Norm. Sup\'er. (4)},
   volume={41},
   date={2008},
   number={3},
   pages={383--435},
   issn={0012-9593},
}

\bib{Knapp}{book}{
   author={Knapp, Anthony W.},
   title={Representation theory of semisimple groups},
   series={Princeton Mathematical Series},
   volume={36},
   note={An overview based on examples},
   publisher={Princeton University Press, Princeton, NJ},
   date={1986},
   pages={xviii+774},
   isbn={0-691-08401-7},
   review={\MR{855239 (87j:22022)}},
}

\bib{Kneser}{article}{
   author={Kneser, Martin},
   title={Strong approximation},
   conference={
      title={Algebraic Groups and Discontinuous Subgroups},
      address={Proc. Sympos. Pure Math., Boulder, Colo.},
      date={1965},
   },
   book={
      publisher={Amer. Math. Soc., Providence, R.I.},
   },
   date={1966},
   pages={187--196},
}

\bib{Manin}{article}{
   author={Manin, Yu. I.},
   title={Three-dimensional hyperbolic geometry as $\infty$-adic Arakelov
   geometry},
   journal={Invent. Math.},
   volume={104},
   date={1991},
   number={2},
   pages={223--243},
   issn={0020-9910},
   doi={10.1007/BF01245074},
}

\bib{Mennecke}{article}{
   author={Mennecke, Susanne},
   title={Höhenfunktionen auf halbeinfachen algebraischen Gruppen},
   journal={Thesis, University of Tübingen},
   date={2015},
   eprint={https://publikationen.uni-tuebingen.de/xmlui/handle/10900/64356},
}

\bib{Pierce}{book}{
   author={Pierce, Richard S.},
   title={Associative algebras},
   series={Graduate Texts in Mathematics},
   volume={88},
   note={Studies in the History of Modern Science, 9},
   publisher={Springer-Verlag, New York-Berlin},
   date={1982},
   pages={xii+436},
   isbn={0-387-90693-2},
}

\bib{Rapin}{article}{
   author={Rapinchuk, Andrei S.},
   title={Strong approximation for algebraic groups},
   conference={
      title={Thin groups and superstrong approximation},
   },
   book={
      series={Math. Sci. Res. Inst. Publ.},
      volume={61},
      publisher={Cambridge Univ. Press, Cambridge},
   },
   date={2014},
   pages={269--298},
}

\bib{STT1}{article}{
   author={Shalika, Joseph},
   author={Takloo-Bighash, Ramin},
   author={Tschinkel, Yuri},
   title={Rational points on compactifications of semi-simple groups of rank
   1},
   conference={
      title={Arithmetic of higher-dimensional algebraic varieties (Palo
      Alto, CA, 2002)},
   },
   book={
      series={Progr. Math.},
      volume={226},
      publisher={Birkh\"auser Boston, Boston, MA},
   },
   date={2004},
   pages={205--233},
}

\bib{STT}{article}{
   author={Shalika, Joseph},
   author={Takloo-Bighash, Ramin},
   author={Tschinkel, Yuri},
   title={Rational points on compactifications of semi-simple groups},
   journal={J. Amer. Math. Soc.},
   volume={20},
   date={2007},
   number={4},
   pages={1135--1186},
   issn={0894-0347},
   doi={10.1090/S0894-0347-07-00572-3},
}

\bib{TT}{article}{
   author={Takloo-Bighash, Ramin},
   author={Tschinkel, Yuri},
   title={Integral points of bounded height on compactifications of
   semi-simple groups},
   journal={Amer. J. Math.},
   volume={135},
   date={2013},
   number={5},
   pages={1433--1448},
   issn={0002-9327},
   doi={10.1353/ajm.2013.0044},
}

\bib{TitsCorv}{article}{
   author={Tits, J.},
   title={Reductive groups over local fields},
   conference={
      title={Automorphic forms, representations and $L$-functions},
      address={Proc. Sympos. Pure Math., Oregon State Univ., Corvallis,
      Ore.},
      date={1977},
   },
   book={
      series={Proc. Sympos. Pure Math., XXXIII},
      publisher={Amer. Math. Soc., Providence, R.I.},
   },
   date={1979},
   pages={29--69},
}

\bib{Werner}{article}{
   author={Werner, Annette},
   title={Arakelov intersection indices of linear cycles and the geometry of
   buildings and symmetric spaces},
   journal={Duke Math. J.},
   volume={111},
   date={2002},
   number={2},
   pages={319--355},
   issn={0012-7094},
   doi={10.1215/S0012-7094-02-11125-9},
}

\bib{Zimmer}{book}{
   author={Zimmer, Robert J.},
   title={Ergodic theory and semisimple groups},
   series={Monographs in Mathematics},
   volume={81},
   publisher={Birkh\"auser Verlag, Basel},
   date={1984},
   pages={x+209},
   isbn={3-7643-3184-4},
   doi={10.1007/978-1-4684-9488-4},
}

\end{biblist} \end{bibdiv}

{\small Mathematisches Institut\\
Auf der Morgenstelle 10\\
72076 T\"ubingen\\
Germany\\
\tt deitmar@uni-tuebingen.de,\\ rupert-gordon.mccallum@uni-tuebingen.de}

\today

\end{document}